%
%
%

\documentclass[graybox]{svmult}

\usepackage{mathptmx}       
\usepackage{helvet}         
\usepackage{courier}        
\usepackage{type1cm}        
%
\usepackage{makeidx}         
\usepackage{graphicx}        
\usepackage{multicol}        
\usepackage[bottom]{footmisc}

\usepackage[psamsfonts]{amssymb}
\usepackage{amsfonts,amsmath,euscript,wrapfig,url}

\newcommand{\rr}{\mathbb{R}}

\newcommand {\rd} {\mathbb{R}^d}

 \newcommand {\al} {\alpha}

\newcommand {\ga} {\gamma}

\newcommand {\la} {\lambda}

\newcommand {\te} {\theta}
\newcommand {\fy} {\varphi}

\newcommand{\ep}{\varepsilon}
\newcommand{\UU}{{\bigcup\limits}}

\newcommand {\ra} {\rightarrow}
\newcommand{\IN}{{\subset}}

\newcommand{\NI}{{\supset}}

\newcommand {\mmm}{{\setminus}}

\newcommand{\8}{{\infty}}

\newcommand{\ia}{{I^*}}

\newcommand{\bj}{{\bf {j}}}

\newcommand{\bi}{{\bf {i}}}
\newcommand{\bk}{{\bf {k}}}

\newcommand{\wP}{{\widetilde P}}

\newcommand{\eS}{{\EuScript S}}

\newcommand{\eC}{{\EuScript C}}

\newcommand{\eZ}{{\EuScript Z}}

\def \Lip {\mathop{\rm Lip}\nolimits}
\def \min {\mathop{\rm min}\nolimits}




\begin{document}

\title*{On Dendrites Generated By Symmetric Polygonal Systems: The Case of Regular Polygons}
\titlerunning{On Symmetric Dendrites} 
\author{Mary Samuel, Dmitry Mekhontsev and Andrey Tetenov \thanks{Supported by Russian Foundation of Basic Research projects 16-01-00414 and 18-501-51021}}
 \authorrunning{M.Samuel, D.Mekhontsev, A.Tetenov} 
\institute{ Mary Samuel\at  Department of Mathematics,
Bharata Mata College, Kochi, India, \email{marysamuel2000@gmail.com}
\and  Dmitry Mekhontsev\at Sobolev Mathematics Institute,
Novosibirsk, Russia, \email{mekhontsev@gmail.com}\and  Andrei Tetenov\at Gorno-Altaisk State University 
and
Novosibirsk State University and
Sobolev Mathematics Institute,
Novosibirsk, Russia, \email{atet@mail.ru}}
%
%
\maketitle

\abstract*{We  define   $G$-symmetric  polygonal systems of similarities and study the properties of symmetric dendrites, which appear as their attractors. This allows us to find the conditions under which the attractor of a zipper becomes a dendrite.}

\abstract{We  define   $G$-symmetric  polygonal systems of similarities and study the properties of symmetric dendrites, which appear as their attractors. This allows us to find the conditions under which the attractor of a zipper becomes a dendrite.}

\section{Introduction}
\label{sec:1}
Though the study of topological properties of dendrites from the viewpoint of general topology proceed for more than  three quarters of a century \cite{Char,Kur, Nad}, the attempts to study the geometrical properties of self-similar dendrites are rather fragmentary.

In 1985, M.~Hata \cite{Hata}  studied the connectedness properties of self-similar sets and proved that if a dendrite is an attractor of a system of weak contractions in a complete metric space, then the set of its endpoints is infinite. In 1990 Ch.~Bandt showed in his unpublished paper \cite{BS} that the Jordan arcs connecting pairs of points of a post-critically finite self-similar dendrite are self-similar, and the set of possible values for dimensions of such arcs is finite. Jun Kigami in his work \cite{Kig95} applied the methods of harmonic calculus on fractals to dendrites; on a way to this he developed effective approaches to the study of  structure of self-similar dendrites. D.Croydon in his thesis \cite{C} obtained heat kernel estimates for continuum random tree and for certain family of p.c.f. random dendrites on the plane.

In our recent works \cite{STV1,STV2,STV3} we considered systems $\eS$ of contraction similarities in $\rd$
defined by some polyhedron $P\IN\rd$, which we called contractible   $P$-polyhedral systems.
We proved that the attractor of such system $\eS$ is a dendrite $K$ in $\rd$;  we showed that the orders of points $x\in K$ have an upper bound, depending only on $P$; and that
Hausdorff dimension of the set $CP(K)$ of the cut points of $K$ is strictly smaller than the dimension of the set $EP(K)$ of its end points unless $K$ is a Jordan arc.

Now we extend our approach to the case of symmetric P-polygonal systems $\eS$ and show that the symmetric dendrites $K$ which are the attractors of these systems, have clear and obvious structure: their main tree is a symmetric n-pod (Proposition \ref{npod}),
all the vertices of the polygon $P$ are the end points of $K$ and show that for $n>5$ each ramification point of $K$ has the order n (Proposition \ref{comp1}). We show that the augmented system $\widetilde\eS$ contain subsystems $\eZ$ which are zippers whose attractors are subdendrites of the dendrite $K$ (Theorem \ref{zipdend}). 

\subsection{Dendrites}
\label{subsec:1}

\begin{definition}
  A {\em dendrite} is a locally connected continuum containing no simple closed curve. 
\end{definition}

In the case of dendrites the order $Ord(p,X)$ of the point  $p$  with respect to  $X$ is equal to the number  of components of the set $X \setminus \{p\}$.  
{Points of order 1 in a continuum $X$ are called {\em end points} of $X$;   the set of all end points of $X$   will
be  denoted by $EP(X)$. A point $p$ of a continuum $X$ is called a {\em cut point} of $X$ provided that $X \setminus \{p\}$ is
not connected; the set of all cut points of $X$ will be denoted by $CP(X)$. 
Points of order at least 3 are called {\em ramification points} of $X$; the  set of all ramification points of $X$ will be denoted by $RP(X)$. }

According to \cite[Theorem 1.1]{Char}, for a continuum $X$ the following conditions are equivalent:
 $X$ is dendrite;
 every two distinct points of $X$ are separated by a third point;
 each point of $X$ is either a cut point or an end point of $X$;
 each nondegenerate subcontinuum of $X$ contains uncountably many cut points of $X$;
 the intersection of every two connected subsets of X is connected; X is locally connected and uniquely arcwise connected.

\subsection{ Self-similar sets}
\label{subsec:1}

Let $(X, d)$ be a complete metric space. 
A mapping $F: X \to X$ is a contraction if $\Lip F < 1$.\\ 
The mapping $S: X \ra X$ is called a similarity if $ d(S(x), S(y)) = r d(x, y) $ for all $x, y\in X$ and some fixed r. 

\begin{definition} 
Let $\eS=\{S_1, S_2, \ldots, S_m\}$ be a system of   contraction maps on a complete metric space $(X, d)$.
 A nonempty compact set $K\IN X$ is the attractor of the system  $\eS$, if $K = \bigcup \limits_{i = 1}^m S_i (K)$. \end{definition}
 
 The system $\eS$ defines the Hutchinson operator $T$  by the equation $T(A) = \bigcup \limits_{i = 1}^m S_i (A)$. By Hutchinson's Theorem, the attractor $K$ is uniquely defined by $\eS$ and for any compact set $A\IN X$ the sequence $T^n(A)$ converges to $K$.
 
 We also call the   subset $K \IN X$ self-similar with respect to $\eS$. Throughout the whole paper, the maps $S_i\in \eS$ are supposed to be  similarities and the set $X$ to be $\rr^2$.
 
 {\bf Notation.}
 $I=\{1,2,...,m\}$ is the set of indices, $\ia=\bigcup\limits_{n=1}^\8 I^n$  
is the set of all finite $I$-tuples, or multiindices $\bj=j_1j_2...j_n$.  By $\bi\bj$ we denote the concatenation of the corresponding multiindices;\\ 
we say $\bi\sqsubset\bj$, if $\bi=i_1\ldots i_n$ is the initial segment in $\bj=j_1\ldots j_{n+k}$ or $\bj=\bi\bk$ for some $\bk\in\ia$;\\
if $\bi\not\sqsubset\bj$ and $\bj\not\sqsubset\bi$, $\bi$ and $\bj$ are {\em incomparable};\\
we write
$S_\bj=S_{j_1j_2...j_n}=S_{j_1}S_{j_2}...S_{j_n}$ 
and for the set $A
\subset X$ we denote $S_\bj(A)$ by $A_\bj$; \\
we also denote by $G_\eS=\{S_\bj, \bj\in\ia\}$ the semigroup, generated by $\eS$.\\
The set of all infinite sequences $I^{\8}=\{{\bf \al}=\al_1\al_2\ldots,\ \ \al_i\in I\}$ is called the
{\em index space}; and $\pi:I^{\8}\rightarrow K$ is the {\em index map
 }, which sends a sequence $\bf\al$ to  the point $\bigcap\limits_{n=1}^\8 K_{\al_1\ldots\al_n}$.

\subsection{Zippers}
\label{subsec:1}
 The simplest  way to construct  a self-similar  curve is  to  take a polygonal line and then replace each of its segments by a smaller copy of the same polygonal line; this construction is called   zipper and was studied in \cite{ATK,Tet06}:
 
\begin{definition} Let $X$  be a complete metric space. A system $\eS = \{S_1, \ldots, S_m\}$
of contraction mappings of $X$ to itself is called a {\em zipper} with vertices $\{z_0, \ldots, z_m\}$
and signature $\ep = (\ep_1, \ldots, \ep_m)$, $\ep_i \in\{0,1\}$, if for    $ i = 1, \ldots m$, $S_i (z_0) = z_{i-1+\ep_i}$ and $S_i (z_m) = z_{i-{\ep_i}}$.\end{definition}

A zipper $\eS$ is a {\em Jordan zipper} if and only if one (and
hence every) of the structural parametrizations of its attractor establishes a homeomorphism of the interval
$J = [0, 1]$ onto $K(\eS)$.

\begin{theorem}\label{Jordan} Let $\eS = \{S_1, . . . , S_m\}$ be a zipper with vertices $\{z_0, . . . , z_m\}$ in a complete metric
space $X$ such that all contractions $S_j : X\to X$ are injective. If for arbitrary $i, j \in I$ the set
$K_i\cap K_j$ is empty for $|i-j| > 1$ and is a singleton for $|i-j| = 1$ then every structural parametrization
$\fy: [0, 1]\to K(\eS)$ of $K(\eS)$ is a homeomorphism and $K(\eS)$ is a Jordan arc with endpoints $z_0$ and $z_m$.\end{theorem}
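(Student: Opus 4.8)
The plan is to show that the structural parametrization $\fy:[0,1]\to K(\eS)$ is injective; since $[0,1]$ is compact and $K(\eS)$ Hausdorff, injectivity plus continuity and surjectivity (which hold for any structural parametrization of a zipper attractor) will immediately give that $\fy$ is a homeomorphism, hence $K(\eS)$ is a Jordan arc, and the endpoint conditions $\fy(0)=z_0$, $\fy(1)=z_m$ follow from the zipper equations $S_i(z_0)=z_{i-1+\ep_i}$, $S_i(z_m)=z_{i-\ep_i}$.

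First I would recall the coding setup: $\fy=\pi\circ\rho$ where $\pi:I^\8\to K(\eS)$ is the index map and $\rho:[0,1]\to I^\8$ sends $t$ to its $m$-adic-type expansion adapted to the zipper (so that $\fy$ maps the subinterval $[\frac{k}{m},\frac{k+1}{m}]$ affinely onto $K_{k+1}$ and respects the signature). The key structural fact is that for the zipper, consecutive pieces $K_i$ and $K_{i+1}$ meet exactly at one vertex $z_i$, and this vertex is the $\fy$-image of the common endpoint of the adjacent parameter subintervals. So I would argue that if $\fy(s)=\fy(t)$ with $s<t$, we can look at the first level of the address where the two points $s,t$ branch into different cylinders $I^n$: say $s\in[\frac{k}{m^n},\frac{k+1}{m^n}]$ lands in $K_{\bi}$ and $t$ lands in $K_{\bj}$ with $\bi,\bj$ incomparable multiindices of length $n$ that agree up to length $n-1$ and differ only in the last symbol, say $\bi=\bu i'$, $\bj=\bu j'$. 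Using the hypothesis, $K_{\bu i'}\cap K_{\bu j'}=S_{\bu}(K_{i'}\cap K_{j'})$ (here injectivity of the $S_j$ is used so that $S_{\bu}$ is a bijection onto its image) is empty unless $|i'-j'|=1$, in which case it is the singleton $\{S_{\bu}(z_{\min(i',j')})\}$. Hence $\fy(s)=\fy(t)$ forces $|i'-j'|=1$ and the common value to be that single vertex; but then both $s$ and $t$ must be the common boundary point of the two adjacent parameter intervals, i.e. $t = \frac{k}{m^{n-1}}\cdot(\text{something})$ — more precisely $s$ is the right endpoint of the $i'$-subinterval and $t$ the left endpoint of the $j'$-subinterval inside the block indexed by $\bu$, and the signature bookkeeping of the zipper forces $s=t$. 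I would also need to handle the degenerate case where $s$ and $t$ are comparable in the coding (one address a prefix-type extension of the other), which similarly collapses to $s=t$ because within a single cylinder $\fy$ restricted there is a rescaled copy of the whole picture.

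The routine-but-necessary steps I would then fill in are: (i) checking that $K_i\cap K_j=\0$ for $|i-j|>1$ propagates to incomparable multiindices whose common prefix is stripped off — this is a straightforward induction on address length using $S_\bu$ injective and the hypothesis; (ii) verifying the parametrization really sends interval endpoints to the vertices $z_i$ consistently with the signature $\ep$, which is exactly the definition of a structural parametrization of a zipper; (iii) the compactness argument closing the homeomorphism.

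The main obstacle I anticipate is the careful combinatorial bookkeeping in step where two addresses first diverge: one must track, through the signature $\ep=(\ep_1,\dots,\ep_m)$, which endpoint of $K_{i'}$ is $S_{\bu i'}(z_0)$ versus $S_{\bu i'}(z_m)$, so as to confirm that the unique intersection point $S_\bu(z_{\min(i',j')})$ is attained by $s$ and $t$ only at the shared parameter value. This is where the hypothesis "$K_i\cap K_j$ a singleton for $|i-j|=1$" does the real work: it rules out the possibility that $\fy$ glues a piece of $K_{\bi}$ to a piece of $K_{\bj}$ along more than a point, which is exactly the obstruction to injectivity. Everything else is soft.
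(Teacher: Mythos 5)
The paper itself gives no proof of Theorem \ref{Jordan}: it is imported from \cite{ATK,Tet06} as a known criterion for zippers, so there is no in-paper argument to compare yours against; I can only judge the outline on its own terms. Your route is the standard one and it does work: reduce everything to injectivity of $\fy$, take two parameters $s<t$ with $\fy(s)=\fy(t)$, pass to the first level at which their $m$-adic addresses diverge, strip the common prefix $S_\bu$ (using injectivity of the maps, so that $K_{\bu i'}\cap K_{\bu j'}=S_\bu(K_{i'}\cap K_{j'})$), and invoke the hypothesis to force $|i'-j'|=1$ and the common value to be the junction vertex; note in passing that the singleton $K_{i'}\cap K_{i'+1}$ is exactly $\{z_{i'}\}$ because $z_{i'}$ lies in both pieces by the vertex conditions $S_i(z_0)=z_{i-1+\ep_i}$, $S_i(z_m)=z_{i-\ep_i}$.

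The one substantive gap is exactly the step you flag but do not carry out: that the junction point $S_\bu(z_{i'})$ is attained, inside each of the two adjacent parameter cylinders, only at their shared endpoint, so that ``the signature bookkeeping forces $s=t$.'' Writing $\Delta_k$ for the $k$-th parameter interval and using the self-similar relation $\fy|_{\Delta_k}=S_k\circ\fy\circ h_k$ ($h_k$ affine, increasing or decreasing according to $\ep_k$), injectivity of $S_{\bu i'}$ reduces this claim to the lemma $\fy^{-1}(z_0)=\{0\}$ and $\fy^{-1}(z_m)=\{1\}$, and this lemma needs two small arguments that are missing from your sketch. First, $z_0\notin K_j$ for $j\ge 2$: for $j\ge 3$ this is the emptiness hypothesis applied to $K_1\cap K_j$ (since $z_0\in K_1$), while $z_0\in K_2$ would give $z_0\in K_1\cap K_2=\{z_1\}$, i.e.\ $z_0=z_1$, whence $S_1(z_0)=S_1(z_m)$ and, by injectivity of $S_1$, $z_0=z_m$; then all vertices coincide and $K$ degenerates to a point, contradicting the emptiness hypothesis (this is also the place where one tacitly uses $z_0\neq z_m$, i.e.\ nondegeneracy of the zipper). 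Second, within $\Delta_1$ the relation $\fy|_{\Delta_1}=S_1\circ\fy\circ h_1$ and injectivity of $S_1$ give $\fy^{-1}(z_0)=h_1^{-1}\bigl(\fy^{-1}(z_0)\bigr)$ when $\ep_1=0$, or $h_1^{-1}\bigl(\fy^{-1}(z_m)\bigr)$ when $\ep_1=1$ (and symmetrically for $z_m$ in $\Delta_m$); iterating these reductions traps $\fy^{-1}(z_0)$ and $\fy^{-1}(z_m)$ in nested intervals of length $m^{-n}$ around $0$ and $1$ respectively, so they are the singletons $\{0\}$ and $\{1\}$. With this lemma in place your divergence-level argument closes completely; without it, the assertion that $s$ and $t$ are both the shared boundary parameter is unsupported, since a priori the junction vertex could have other addresses inside the two adjacent cylinders.
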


%
%

\section{Contractible $P$-polygonal   systems}
\label{sec:2}

Let $P$ be a convex polygon in ${\rr}^2$ and $V_P=\{A_1, \ldots, A_{n_P}\}$  be the set of its vertices, where $n_P=\#V_P$.\\
Consider a system of contracting similarities $\eS = \{S_1, \ldots, S_m\}$, which possesses  the following properties:\\
{\bf(D1)}\   For any $k \in I$, the set $P_k = S_k (P)$ is contained in $P$;
\\
{\bf(D2)}\    For any $i\neq j$,\ $i, j \in I$, $P_i \bigcap P_j$ is either empty or  a common vertex of $P_i$ and $P_j$;\\
  {\bf(D3)}\  For any  $A_k\in V_P$ there is a map $S_i\in\eS$ and a vertex $A_l\in V_P$ such that $S_i(A_l)= A_k$;\\
{\bf(D4)}\   The set    ${\wP} = \bigcup \limits_{i = 1}^m P_i$ is contractible.

\begin{definition}\label{pts}
The system $(P,\eS)$ satisfying the conditions {\bf D1-D4} is called  a contractible P-polygonal  system of similarities.
\end{definition}

Applying Hutchinson operator  $T (A)=\bigcup\limits_{i\in I} S_i(A)$ of the system $\eS$ to the polygon $P$, we get the set ${\wP}^{(1)} = \bigcup \limits_{i\in I} P_i$. Taking the iterations of $T$, we define   ${\wP}^{(n + 1)} = T(\wP^{(n)})$  and get a nested family of contractible compact sets
${\wP}^{(1)}\NI {\wP}^{(2)}\NI \ldots \NI  {\wP}^{(n)}\NI\ldots$.
By Hutchinson's theorem, the intersection of this nested sequence is the attractor $K$ of the system $\eS$.

The following Theorem was proved by the authors in \cite{STV1,STV2,STV3}:

\begin{theorem}\label{main}   Let $\eS$ be a contractible P-polygonal system, and let $K$ be its attractor. Then $K$ is a dendrite.\end{theorem}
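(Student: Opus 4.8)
The plan is to show that $K$ is (i) a continuum, (ii) locally connected, and (iii) contains no simple closed curve, which together give a dendrite by the definition above. First I would establish connectedness of $K$. Since $\wP^{(1)}=\bigcup_i P_i$ is contractible by \textbf{(D4)}, in particular it is connected; and because $T(\wP^{(n)})$ is obtained by gluing the similar copies $S_\bj(\wP^{(1)})$ along vertices inherited from the containment \textbf{(D1)}, one shows inductively that each $\wP^{(n)}$ is contractible, hence a continuum. The attractor $K=\bigcap_n\wP^{(n)}$ is then a nested intersection of continua, so $K$ is a continuum.

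Next I would prove local connectedness. The standard route is to invoke the general fact that the attractor of any IFS of contractions whose $\wP^{(1)}$-analogue is connected is automatically locally connected (Hata's criterion): if the union $\bigcup_i S_i(K)$ is connected, then $K$ is locally connected. Since $K\IN\wP$ and the pieces $K_i=S_i(K)$ meet exactly where the polygons $P_i$ meet, connectedness of $\wP$ descends to connectedness of $\bigcup_i K_i$, and Hata's theorem applies. Alternatively one can argue directly: given $\ep>0$, choose $n$ with $\diam K_\bj<\ep$ for all $|\bj|=n$; the finitely many pieces $K_\bj$ and their intersection pattern (a vertex graph, by \textbf{(D2)}) form a connected ``chain-connected'' cover of mesh $<\ep$, which yields the required small connected neighborhoods.

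The heart of the argument, and the main obstacle, is ruling out simple closed curves. Suppose $\gamma\IN K$ is a simple closed curve. The key geometric input is \textbf{(D2)}: distinct first-level pieces $P_i,P_j$ meet in at most one point, so the ``intersection graph'' on $\{1,\dots,m\}$ with an edge for each nonempty $P_i\cap P_j$ is a \emph{tree} — this is exactly what contractibility of $\wP$ in \textbf{(D4)} buys, since a cycle in this graph would produce a loop in $\wP$, contradicting simple connectedness. Now push $\gamma$ down the hierarchy: $\gamma=\bigcup_i(\gamma\cap K_i)$, and since the $K_i$ overlap only at cut vertices of a tree, $\gamma$ cannot pass from $K_i$ to $K_j$ and back without revisiting the separating vertex, which is impossible for a simple closed curve unless $\gamma$ lies entirely within a single $K_i$. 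Iterating, $\gamma\IN K_\bj$ for arbitrarily long $\bj$, forcing $\diam\gamma=0$, a contradiction. One must be careful here: the intersection points of the $K_\bj$ may be more intricate than those of the $P_\bj$, so the argument should be phrased at the level of the polygonal approximants $\wP^{(n)}$ — each is contractible, hence its first homology vanishes, and a simple closed curve in $K\IN\wP^{(n)}$ would be non-nullhomotopic in a small neighborhood for large $n$, contradicting the acyclicity. Assembling (i)–(iii) completes the proof that $K$ is a dendrite. \vse
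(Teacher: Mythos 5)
Your overall strategy (continuum + locally connected + no simple closed curve, with descent down the hierarchy) is reasonable, and local connectedness via Hata's criterion is fine once connectedness is known; note, however, that this paper does not reprove Theorem \ref{main} but cites \cite{STV1,STV2,STV3}, so your sketch has to stand on its own — and as written it has two genuine gaps. First, your connectedness induction never uses \textbf{(D3)}, and \textbf{(D1)} does not provide the "gluing along vertices" you invoke. The point of \textbf{(D3)} is that every vertex of $P$ is an image of a vertex, whence $V_P\IN \wP^{(n)}$ for every $n$ and in fact $V_P\IN K$; only then does the common vertex $S_i(A_k)=S_j(A_l)$ of $P_i\cap P_j$ actually belong to $S_i(\wP^{(n)})\cap S_j(\wP^{(n)})$ and to $K_i\cap K_j$, so that the level-one intersection pattern persists at all depths. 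Without \textbf{(D3)} the sets $K_i$ and $K_j$ may miss their polygons' common vertex, each $\wP^{(n)}$ need not stay connected, and $K$ need not be connected; so this condition must appear explicitly in your induction.

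Second, the key combinatorial claim in your no-loop argument is false: the graph on $\{1,\dots,m\}$ with an edge for every nonempty $P_i\cap P_j$ need not be a tree, because three or more subpolygons may share a single common vertex — this is permitted by \textbf{(D2)}, is exactly the situation handled in Theorem \ref{order}(c) and Proposition \ref{comp1}(c), and produces cycles in your intersection graph while $\wP$ remains contractible. Hence "a cycle in this graph would produce a non-nullhomotopic loop" is an invalid inference, and the subsequent separation argument must instead be phrased via the incidence structure of pieces and gluing points (equivalently: each common vertex is a cut point of $\wP$ separating the pieces it joins, which is what \textbf{(D2)}+\textbf{(D4)} really give); with that, the correct statement is that a simple closed curve, remaining connected after removal of one point, cannot meet two pieces in points separated by a gluing vertex, so it lies in a single $K_i$, and iteration forces $\diam\ga=0$. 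Your fallback "careful" version is also not a proof: a simple closed curve inside the contractible set $\wP^{(n)}$ contradicts nothing, since contractible planar sets contain many simple closed curves and you have not shown the curve is essential in $\wP^{(n)}$. To get a contradiction from the approximants you would need an additional step — for instance, that the Jordan interior of the curve is contained in every $\wP^{(n)}$ (a contractible planar compactum has connected complement) and hence in $K$, combined with a proof that $K$ has empty interior — or else the corrected cut-point argument above; neither is supplied in your sketch.
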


Since $K$ is a dendrite, for any vertices $A_i,A_j \in V_P$ there is an unique Jordan arc ${\ga}_{ij}\IN K$ connecting $A_i,A_j$.
The set  $\hat\ga=\bigcup\limits_{i\neq j}\ga_{ij}$ is a subcontinuum of the dendrite  $K$, all of whose end points   are contained in $V_P$, so $\hat\ga$ is a finite dendrite or topological tree  \cite[{\bf A.17}]{Char}.

\begin{definition}\label{defmt}
The union $\hat\ga=\bigcup\limits_{i\neq j}\ga_{ij}$ is called 
{\em the main tree} of the dendrite $K$. The ramification points of
$\hat\ga$ are called {\em main ramification points} of the dendrite
$K$.
\end{definition}

We consider $\hat\ga$ as a topological graph whose vertex set $V_{\hat\ga}$ is the union of $V_P$ and the set of ramification points $RP(\hat\ga)$, while the edges of $\hat\ga$ are the  components of $\hat\ga\mmm V_{\hat\ga}$.  

The following Proposition \cite{STV1} show the relation between the vertices of $P$ and end points, cut points and ramification points of $\hat\ga$.

\begin{proposition}\label{comp} 
a) For any $x\in\hat\ga$, $\hat\ga=\bigcup\limits_{ j=1}^n\ga_{A_jx}$.\\
b) $A_i$ is a cut point of $\hat\ga$, if there are $j_1,j_2$ such that $\ga_{j_1i}\cap\ga_{j_2i}=\{A_i\}$;\\
c) the only end points of $\hat\ga$ are the vertices $A_j$ such that $A_j\notin CP(\hat\ga)$;\\
d) if $\#\pi^{-1}(A_i)=1$, then $Ord(A_i,K)\le n-1$,
otherwise $$Ord(A_i,K)\le (n-1)\left(\left\lceil{\dfrac{\te_{max}}{\te_{min}}}\right\rceil-1\right),$$ 
where $\te_{max},\te_{min}$ are maximal and minimal values of vertex angles of $P$.
 \end{proposition}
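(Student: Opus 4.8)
The plan is to prove each part by working with the characterization of dendrites (every point is a cut point or an end point; any two points are separated by a third) together with the structure of $\hat\ga$ as a finite topological tree whose end points lie in $V_P$.

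For part (a), I would observe that $\hat\ga$ is arcwise connected (being a subcontinuum of a dendrite), so for $x\in\hat\ga$ the union $\bigcup_j\ga_{A_jx}$ is a subcontinuum of $\hat\ga$ containing every vertex $A_j$; since $\hat\ga$ is by definition the smallest subcontinuum containing all $\ga_{ij}$ and each $\ga_{ij}\IN\ga_{A_ix}\cup\ga_{A_jx}$ by the unique-arc property (the arc from $A_i$ to $A_j$ passes through $x$ or is contained in the union of the two arcs to $x$), equality follows. For part (b), if $\ga_{j_1i}\cap\ga_{j_2i}=\{A_i\}$, then the two arcs leave $A_i$ in "different directions," so removing $A_i$ separates a point of $\ga_{j_1i}\mmm\{A_i\}$ from a point of $\ga_{j_2i}\mmm\{A_i\}$ inside $\hat\ga$; hence $A_i\in CP(\hat\ga)$. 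Part (c) is then a formal consequence of the dendrite dichotomy applied to the finite tree $\hat\ga$: a vertex $A_j$ is an end point of $\hat\ga$ iff it is not a cut point, and by (a) every end point of $\hat\ga$ must be one of the $A_j$ (any point of $\hat\ga$ not equal to some $A_j$ lies on an arc $\ga_{A_jx}$ in its interior, so is a cut point).

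The substantive part is (d), and I would split it by whether $\pi^{-1}(A_i)$ is a singleton. If $\#\pi^{-1}(A_i)=1$, then a neighborhood of $A_i$ in $K$ is covered by the single piece $P_\bj$ whose address is a shift of the unique code of $A_i$; since $A_i$ is a vertex of the polygon $P_\bj$ sitting inside $P$, only the arcs of $\hat\ga$ issuing toward the at most $n-1$ other vertices can pass through $A_i$, giving $Ord(A_i,K)\le n-1$. If $\#\pi^{-1}(A_i)>1$, several pieces $P_{\bj_1},\dots,P_{\bj_k}$ meet at $A_i$; by condition {\bf(D2)} they meet only at that common vertex, and a counting argument bounds $k$ by how many copies of $P$ can fit around a full angle $2\pi$ at $A_i$ while each occupies at least the smallest vertex angle $\te_{min}$ and at most $\te_{max}$ — this is where the ceiling $\lceil\te_{max}/\te_{min}\rceil$ enters. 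Each such piece contributes at most $n-1$ components to $K\mmm\{A_i\}$ (the directions toward its other vertices), and the rescaling by $\eS$ maps the order bound for $A_l$ with $S_\bj(A_l)=A_i$ into the corresponding bound at $A_i$, yielding the stated product.

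\textbf{The main obstacle} I anticipate is making the geometric packing estimate in (d) rigorous: one must control, using only {\bf(D1)}--{\bf(D4)}, how many of the pieces $P_\bj$ can share the vertex $A_i$ and show that the "angular room" each occupies is bounded below by $\te_{min}$ after the similarities are taken into account — similarities preserve angles, so each $P_\bj$ still subtends one of the vertex angles of $P$ at $A_i$, and the non-overlapping condition {\bf(D2)} forces their angular sectors to be essentially disjoint, summing to at most $2\pi$. Translating "at most $2\pi$ worth of sectors, each at least $\te_{min}$, some possibly as large as $\te_{max}$" into the clean bound $\lceil\te_{max}/\te_{min}\rceil-1$ on the number of extra pieces is the delicate bookkeeping step; everything else is a routine application of the dendrite structure theorem and the self-similarity $K_\bj=S_\bj(K)$.
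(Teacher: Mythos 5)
Parts (a)--(c) of your proposal are essentially correct: they rest on the standard dendrite facts (uniqueness of arcs, connected subsets of a dendrite contain the arc between any two of their points, every point is either a cut point or an end point), and since the paper itself only cites \cite{STV1} for this proposition and gives no proof, there is nothing more to compare there -- your outline is the natural argument, with only the harmless imprecision that $\hat\ga$ is \emph{defined} as the union $\bigcup\ga_{ij}$ rather than as the smallest subcontinuum containing the vertices (the two coincide).

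Part (d), however, has a genuine gap, and it sits exactly where you yourself place the ``delicate bookkeeping step.'' You propose to bound the number $s$ of pieces $S_{\bj_k}(P)$ sharing the vertex $A_i$ by packing their angular sectors into a full angle $2\pi$, each sector being between $\te_{min}$ and $\te_{max}$, and to extract $\lceil\te_{max}/\te_{min}\rceil-1$ from that. This cannot work: a $2\pi$ budget with sectors of size at least $\te_{min}$ only gives $s\le\lceil 2\pi/\te_{min}\rceil-1$, which is the weaker bound of Theorem \ref{order}(c) valid for arbitrary points of $\bigcup_{\bj}S_\bj(V_P)$, not the bound claimed here. The actual source of $\te_{max}$ is different: $A_i$ is a vertex of $P$ itself, and by (D1) all pieces lie inside $P$, so their sectors at $A_i$ must pack into the interior angle of $P$ at $A_i$, which is at most $\te_{max}$; each sector is at least $\te_{min}$ since similarities preserve angles, and (D2) forbids two sectors from sharing a boundary ray near $A_i$ (they would then intersect in a segment, not a point), which also rules out the borderline case when $\te_{max}/\te_{min}$ is an integer. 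Separately, your unique-address case is under-argued: from ``a neighbourhood of $A_i$ in $K$ lies in a single cell $K_\bj$'' you conclude $Ord(A_i,K)\le n-1$ by speaking of arcs of $\hat\ga$ through $A_i$, but $Ord(A_i,K)$ counts components of $K\mmm\{A_i\}$, not branches of the main tree; localizing to the cell only yields $Ord(A_i,K)=Ord(A_l,K)$ for another vertex $A_l$ with unique address, which is circular. To close it you need the reduction of the order in $K$ to the order in the main tree (part (a) gives $Ord(x,\hat\ga)\le n-1$ for $x\in V_P$, and a statement in the spirit of Theorem \ref{order} identifies $Ord(A_i,K)$ with such an $Ord(x,\hat\ga)$), or an explicit argument that every component of $K\mmm\{A_i\}$ meets the relevant image of $\hat\ga$.
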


It was proved in \cite{STV1,STV2,STV3} that each cut point of the dendrite $K$ is contained in some image $S_\bj(\hat\ga)$ of the main 
tree: 

\begin{theorem}\label{order} The following statements are true:\\
a) $CP(K)\IN\bigcup\limits_{\bj\in I^*}S_\bj(\hat\ga)$.\\ b) For each cut point $y\in K\mmm\bigcup\limits_{\bj\in I^*}S_\bj(V_P)$, $\#\pi^{-1}(y)=1$ and there is $S_\bi$  and $x\in\hat\ga$, such that
$y=S_\bi(x)$ and   $Ord(y,K)=Ord(x,{\hat\ga})$.\\ 
c) If $y\in\bigcup\limits_{\bj\in I^*}S_\bj(V_P)$ and $\#\pi^{-1}(y)=s$, there are 
multiindices $\bi_k, k=1,..,s$ and vertices $x_1,...,x_s$, 
such that for any  $k$, $S_{\bi_k}(x_k)=y$ and for any $l\neq k$,  $S_{\bi_k}(P)\cap S_{\bi_l}(P)=\{y\}$\\ and
 $ Ord(y,K)=\sum\limits_{k=1}^s Ord(x_k,{\hat\ga})\le(n_P-1)\left(\left\lceil{\dfrac{2\pi}{\te_{min}}}\right\rceil-1\right)$.
\end{theorem}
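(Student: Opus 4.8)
The whole theorem hinges on a subinvariance property of the main tree, which I establish first. \emph{Claim: $\hat\ga\IN T(\hat\ga)=\bigcup_{i\in I}S_i(\hat\ga)$, and in fact $\hat\ga\cap K_k\IN S_k(\hat\ga)$ for every $k$.} Indeed, take $x\in\ga_{pq}\IN\hat\ga$ with $x\in K_k$. As $K$ is a dendrite (Theorem \ref{main}) and $K_k$ a subcontinuum, $\ga_{pq}\cap K_k$ is a subarc $[c,d]\ni x$; each of its endpoints is either an endpoint $A_p$ or $A_q$ of $\ga_{pq}$ — an extreme point of $P$ lying in $P_k$, hence a vertex of $P_k$ — or a point of some $K_k\cap K_l$, $l\ne k$, which by (D2) is also a vertex of $P_k$. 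Thus $[c,d]$ is the unique arc of $K_k=S_k(K)$ joining two vertices of $P_k$, so $[c,d]=S_k(\ga_{p'q'})\IN S_k(\hat\ga)$. Iterating, the sets $T^n(\hat\ga)$ form an increasing chain of finite trees, so $\Gamma:=\bigcup_{\bj\in\ia}S_\bj(\hat\ga)=\bigcup_n T^n(\hat\ga)$ and $\ov{\Gamma}=K$. The same computation shows that for incomparable $\bi,\bj$ the set $K_\bi\cap K_\bj$ is at most a single point, lying in $\bigcup_\bk S_\bk(V_P)\IN\Gamma$; hence distinct cylinders of one level are glued only along $\Gamma$, every point with two or more addresses lies in $\bigcup_\bk S_\bk(V_P)$, and $S_k(K\mmm\hat\ga)\IN K_k\mmm\hat\ga$.

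For (a) it is enough to prove $K\mmm\Gamma\IN EP(K)$. Fix $y\notin\Gamma$. By the above it has a unique address $\al$; since $y\notin T^n(\hat\ga)$, it lies in exactly one level-$n$ cylinder $K_{\al_1\ldots\al_n}$, and the union of the other level-$n$ cylinders is closed and misses $y$, so $K_{\al_1\ldots\al_n}$ is a neighbourhood of $y$. Therefore $Ord(y,K)=Ord(y_n,K)$, where $y_n:=S_{\al_1\ldots\al_n}^{-1}(y)$, and $y_n\notin\hat\ga$ (as $S_{\al_1\ldots\al_n}(\hat\ga)\IN\Gamma$); thus $y_n$ lies in a component $V_n$ of $K\mmm\hat\ga$, an open set whose boundary in $K$ is a single point of $\hat\ga$, and $Ord(y,K)=Ord(y_n,\ov{V_n})$, while the nested copies $S_{\al_1\ldots\al_n}(\ov{V_n})\ni y$ shrink to $\{y\}$. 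It remains to deduce $Ord(y,K)=1$; this is the crux and cannot come from the dendrite axioms alone. One uses that $\hat\ga$ is a \emph{finite} topological graph, so the local configurations occurring in its self-similar decomposition — the combinatorial types of the pairs $(P_\bj,\hat\ga\cap K_\bj)$ and of the components of $K\mmm\hat\ga$ lying inside them, which is precisely the finiteness effect recorded in the Introduction — form finitely many classes. A pigeonhole argument along $\al_1\al_2\ldots$ then produces a similarity mapping the local picture at $y$ onto a strictly larger copy of itself, which is impossible for a point interior to an arc, forcing $Ord(y,K)=1$. I expect this finiteness step to be the main obstacle; everything below is localization that uses (a).

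Let now $y\in CP(K)$. If $y\notin\bigcup_\bk S_\bk(V_P)$, then $y$ has a unique address $\al$ (first paragraph), and by (a) there is an $n$ with $x:=S_{\al_1\ldots\al_n}^{-1}(y)\in\hat\ga$. Since $\hat\ga\cap K_j\IN S_j(\hat\ga)$, the property ``$x\in\hat\ga$'' persists when $n$ grows, and $Ord(S_{\al_1\ldots\al_n}^{-1}(y),\hat\ga)$ is nondecreasing in $n$ and bounded by $Ord(y,K)$, hence eventually constant; at such an $n$ no component of $K\mmm\hat\ga$ can be attached to $\hat\ga$ at $x$ (by the finiteness of local configurations, as in (a), such a branch would reappear at arbitrarily large scale in the bounded set $K$), so $Ord(x,K)=Ord(x,\hat\ga)$. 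With $\bi=\al_1\ldots\al_n$ this gives $y=S_\bi(x)$, $x\in\hat\ga$, $\#\pi^{-1}(y)=1$, and $Ord(y,K)=Ord(x,K)=Ord(x,\hat\ga)$, which is (b).

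Finally, if $y\in\bigcup_\bk S_\bk(V_P)$ with $\#\pi^{-1}(y)=s$ and addresses $\al^{(1)},\ldots,\al^{(s)}$, choose $n$ beyond every splitting level of these sequences and, increasing it further as in (b) if necessary. Put $\bi_k:=\al^{(k)}_1\ldots\al^{(k)}_n$; then $K_{\bi_1},\ldots,K_{\bi_s}$ are homeomorphic copies of $K$ whose union is a neighbourhood of $y$ and which pairwise meet only in $y$, and $y$ is (by (D2)) a common vertex of the $P_{\bi_k}$, so $x_k:=S_{\bi_k}^{-1}(y)$ is a vertex of $P$ with $\#\pi^{-1}(x_k)=1$. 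Disjointness away from $y$ yields $Ord(y,K)=\sum_{k=1}^sOrd(x_k,K)=\sum_{k=1}^sOrd(x_k,\hat\ga)$, the second equality as in (b). The subpolygons $P_{\bi_k}$ are non-overlapping convex wedges at $y$, so the corresponding vertex angles of $P$ satisfy $\sum_{k=1}^s\te_{x_k}\le2\pi$; combining the order bounds of Proposition \ref{comp} for $Ord(x_k,\hat\ga)$ (the $\le n_P-1$ arcs of $\hat\ga$ leaving $x_k$, counted with the angular room $\te_{x_k}$ available there) with this packing constraint and the elementary inequality $\sum_k\lceil a_k\rceil\le\lceil\sum_k a_k\rceil+(s-1)$ gives $Ord(y,K)\le(n_P-1)\bigl(\lceil 2\pi/\te_{min}\rceil-1\bigr)$.
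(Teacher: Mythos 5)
Your first paragraph (the subinvariance $\hat\ga\cap K_k\IN S_k(\hat\ga)$, the fact that points with several addresses lie in $\bigcup_\bk S_\bk(V_P)$, and the cylinder--neighbourhood localization $Ord(y,K)=Ord(y_n,K)$) is sound. But note first that this paper does not prove Theorem \ref{order} at all --- it quotes it from \cite{STV1,STV2,STV3} --- so you are really competing with those papers; and, judged on its own, your argument has a genuine hole exactly where you yourself flag it. The entire content of part (a) is the claim that a point $y\notin\Gamma=\bigcup_{\bj\in\ia}S_\bj(\hat\ga)$ has $Ord(y,K)=1$, and for this you offer only a sketch: ``finitely many combinatorial classes of local configurations, pigeonhole along the address, a similarity mapping the local picture onto a strictly larger copy of itself, impossible for a point interior to an arc.'' None of this is carried out: the finiteness of ``combinatorial types of the pairs $(P_\bj,\hat\ga\cap K_\bj)$'' is not established for a general contractible $P$-polygonal system, and the purported contradiction is not articulated (self-similar repetition of a local picture at infinitely many scales is the normal state of affairs in an attractor, not an absurdity). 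The same unproven mechanism is then invoked a second time, in (b), to justify ``at such an $n$ no component of $K\mmm\hat\ga$ is attached at $x$, so $Ord(x,K)=Ord(x,\hat\ga)$'', and reused in (c) for the vertices $x_k$; so both equalities $Ord(y,K)=Ord(x,\hat\ga)$ and $Ord(y,K)=\sum_k Ord(x_k,\hat\ga)$ rest on a step that is asserted, not proved. This is the substance of the theorem, so the proposal is incomplete.

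Two further remarks. For (a) there is a much more economical route that avoids any finiteness/pigeonhole argument: show that each $T^n(\hat\ga)$ is a subcontinuum of $K$ (each $S_\bj(\hat\ga)$ is connected and contains the vertices of $P_\bj$; by (D2) and the connectedness of $\wP^{(n)}$ the intersection graph of the level-$n$ cells is connected, and neighbouring copies share the common vertex), note that $\Gamma=\bigcup_n T^n(\hat\ga)$ is dense in $K$, and then, for a cut point $y$, pick points of one $T^n(\hat\ga)$ in two distinct components of $K\mmm\{y\}$: the unique arc joining them lies in the subcontinuum $T^n(\hat\ga)$ and must pass through $y$, hence $y\in\Gamma$. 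Finally, in (c) your counting is both muddled and slightly wrong: the inequality $\sum_k\lceil a_k\rceil\le\lceil\sum_k a_k\rceil+(s-1)$ plays no role; what you need is $s\le\left\lceil 2\pi/\te_{min}\right\rceil-1$ together with $Ord(x_k,\hat\ga)\le n_P-1$, and the non-strict packing bound $\sum_k\te_{x_k}\le 2\pi$ does not give this when $2\pi/\te_{min}$ is an integer --- you must use (D2) to argue that two of the wedges $P_{\bi_k}$ cannot share a boundary ray at $y$, so the angle sum is strictly less than $2\pi$.
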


Moreover, the dimension of the set of the end points is always greater then the one of the set of cut points \cite{STV2,STV3}

\begin{theorem}\label{equal}
Let $(P,\eS)$ be a contractible $P$-polyhedral   system and $K$ be its attractor.
(i) $\dim_H(CP(K))=\dim_H(\hat\ga)\le \dim_H EP(K)=\dim_H(K)$;
(ii) $\dim_H(CP(K))=\dim_H(K)$ iff $K$ is a Jordan arc.
\end{theorem}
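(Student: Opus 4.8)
The plan is to establish Theorem \ref{equal} in two parts, relying heavily on Theorems \ref{order} and \ref{main} together with the self-similar structure of $K$.

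\textbf{Part (i).} First I would observe that $\dim_H(CP(K)) = \dim_H(\hat\ga)$. The inequality $\dim_H(\hat\ga) \le \dim_H(CP(K))$ is almost immediate: by Proposition \ref{comp}(d) each nondegenerate subarc of $\hat\ga$ contains uncountably many cut points of $K$ (or simply, interior points of arcs in a dendrite are cut points), so $\hat\ga$ minus its endpoints lies in $CP(K)$, and removing finitely many points does not change Hausdorff dimension. For the reverse inequality, by Theorem \ref{order}(a) we have $CP(K) \subset \bigcup_{\bj \in I^*} S_\bj(\hat\ga)$, a countable union of similar copies of $\hat\ga$; since Hausdorff dimension is stable under countable unions and invariant under similarities, $\dim_H(CP(K)) \le \dim_H(\hat\ga)$. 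This gives the equality. Next, $\dim_H(\hat\ga) \le \dim_H(K)$ is trivial since $\hat\ga \subset K$. Finally I would show $\dim_H EP(K) = \dim_H(K)$: since $K$ is a dendrite, $K = CP(K) \cup EP(K)$, so $\dim_H(K) = \max(\dim_H CP(K), \dim_H EP(K))$; if $\dim_H EP(K) < \dim_H(K)$ were to hold, this would force $\dim_H CP(K) = \dim_H(K) = \dim_H(\hat\ga)$, which is the borderline situation handled in Part (ii) — so I would either argue by this dichotomy or directly: the set $K \mmm \hat\ga$ consists of ``branches'' that are themselves scaled copies of $K$ attached along $\hat\ga$, and these contribute the full dimension through their endpoints.

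\textbf{Part (ii).} The ``if'' direction is trivial: if $K$ is a Jordan arc then $CP(K)$ is the arc minus two points, so $\dim_H(CP(K)) = \dim_H(K)$. The substance is the ``only if'' direction. I would argue by contrapositive: suppose $K$ is not a Jordan arc. Then $\hat\ga$ is not a Jordan arc either — indeed, if $\hat\ga$ were an arc, one could use condition \textbf{(D3)} and the self-similar structure to show every point of $K$ lies on images of $\hat\ga$ in a way forcing $K$ to be an arc, contradiction (or: a ramification point of $K$ in some $S_\bj(P)$ produces a ramification point of $\hat\ga$ by \textbf{(D3)}). So $\hat\ga$ has a ramification point, hence there is a multiindex $\bj$ and a proper subarc relation giving a copy $S_\bj(\hat\ga)$ that branches off; more usefully, there exists a subdendrite $K_\bj = S_\bj(K)$ attached to $\hat\ga$ at a single point, and $\hat\ga \not\supset K_\bj$. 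Then $EP(K) \supset EP(K_\bj) = S_\bj(EP(K))$, and one can pack countably many disjoint scaled copies of $EP(K)$ into $EP(K) \mmm \hat\ga$, while $CP(K)$ is confined to the countable union $\bigcup S_\bi(\hat\ga)$ which has dimension $\dim_H(\hat\ga) < \dim_H(K)$. The key quantitative step is to verify that the strict inequality $\dim_H(\hat\ga) < \dim_H(K)$ actually holds when $K$ is not an arc: this follows because the similarity dimension (equivalently, by the open set condition which holds here via the polygon $P$) of the sub-IFS generating $\hat\ga$ involves strictly fewer ``productive'' maps than the full system, or because $\hat\ga$, being covered by a proper subsystem's worth of copies, satisfies a strict dimension drop. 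I would make this precise using the fact that $\hat\ga$ is itself a self-similar set under an appropriate graph-directed system with strictly smaller ``capacity'' than $K$.

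\textbf{Main obstacle.} The delicate point is the strict inequality $\dim_H(CP(K)) < \dim_H(K)$ when $K$ is not a Jordan arc — equality of dimensions would otherwise be possible only in this degenerate configuration. Establishing strictness requires showing that the graph-directed self-similar structure on $\hat\ga$ (obtained from Theorem \ref{order}, which confines cut points to $\bigcup_\bj S_\bj(\hat\ga)$) genuinely has smaller dimension than $K$ — i.e., that the branches carrying the endpoints are not ``dimensionally negligible''. I expect this to rest on a careful bookkeeping of which maps $S_\bi$ send $\hat\ga$ into $\hat\ga$ versus which create new branches, combined with the open set condition supplied by the polygon $P$ so that similarity dimension equals Hausdorff dimension throughout. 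The references \cite{STV2,STV3} presumably contain exactly this computation, so I would import it; the rest of the argument is the soft dendrite-theoretic and countable-stability reasoning sketched above.
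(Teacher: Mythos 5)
First, a remark on the comparison itself: this paper does not prove Theorem \ref{equal} at all — it is quoted from \cite{STV2,STV3} — so your proposal can only be measured against the argument in those references, whose substance is precisely the quantitative step you leave out.

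The soft part of your plan is fine and matches the standard reduction: Theorem \ref{order}(a) plus countable stability and similarity-invariance of Hausdorff dimension give $\dim_H CP(K)\le\dim_H\hat\ga$, the inclusion of $\hat\ga$ minus finitely many endpoints in $CP(K)$ gives the reverse inequality, the ``if'' half of (ii) is trivial, and the dendrite decomposition $K=CP(K)\cup EP(K)$ reduces both $\dim_H EP(K)=\dim_H K$ and the ``only if'' half of (ii) to a single claim: if $K$ is not a Jordan arc then $\dim_H\hat\ga<\dim_H K$. But that claim \emph{is} the theorem, and you do not prove it — you explicitly propose to ``import'' it from \cite{STV2,STV3}, i.e.\ from the very papers whose result you are asked to establish. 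A genuine proof has to set up the graph-directed system of similarities whose attractors are the arcs $\ga_{ij}$ (each arc decomposes along the chain of pieces $S_i(P)$ it crosses), use the open set condition furnished by $P$ to identify $\dim_H K$ with the root $s$ of $\sum r_i^s=1$, show that when $K$ is not an arc the chains carrying $\hat\ga$ omit maps of $\eS$ at some level, and then prove that this omission forces the spectral radius/row sums of the graph-directed matrix at $s$ strictly below $1$; none of this bookkeeping, nor the (not automatic) fact that a proper subsystem yields a strictly smaller dimension, appears in your sketch.

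Moreover, the one concrete idea you do offer toward strictness is wrong. You claim that if $K$ is not a Jordan arc then $\hat\ga$ is not a Jordan arc (``a ramification point of $K$ in some $S_\bj(P)$ produces a ramification point of $\hat\ga$''). Theorem \ref{order}(c) shows exactly why this fails: ramification points of $K$ can arise at points of $\bigcup_\bj S_\bj(V_P)$ where several copies share a common vertex (or where a shared vertex has a preimage of order $2$ in $\hat\ga$), with $Ord(y,K)=\sum_k Ord(x_k,\hat\ga)\ge 3$ even though $\hat\ga$ has no ramification point. For instance, take $P$ a triangle, a chain of small triangles realizing $\hat\ga$ as an arc through all three vertices of $P$, and one extra small triangle attached at a contact point of two chain triangles: all of {\bf D1--D4} hold, $\hat\ga$ is an arc, yet $K$ has an order-$3$ point, so $K$ is not an arc. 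The correct dichotomy driving the dimension drop is not the shape of $\hat\ga$ but whether the chains of pieces carrying the arcs $\ga_{ij}$ exhaust the whole system $\eS$; turning ``$K$ is not an arc'' into that statement is part of the missing core. (A smaller point: as you implicitly noticed, $\dim_H EP(K)=\dim_H K$ cannot hold verbatim when $K$ is a Jordan arc, since then $EP(K)$ is a two-point set; so your dichotomy for (i) genuinely needs the non-arc case, i.e.\ again the unproved strict inequality.)
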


\section{Symmetric polygonal  systems}
\label{sec:3}


\begin{definition} \label{sps} Let $P$ be a polygon and $G$ be a non-trivial symmetry group of the polygon $P$.  
Let $\eS$ be a contractible $P$-polygonal  system such that for any $g \in G$ and any $S_i\in\eS$,  there are such  $g' \in G$ and  $S_j\in\eS$ that 
$g\cdot S_i=S_j\cdot g'$.  Then the system of mappings $\eS = \{S_i ,  i = 1,  2,  \ldots, m\}$ is called a contractible { $G$-symmetric} 
$P$-polygonal  system. \end{definition}

\begin{center}  \includegraphics[width=.23 \textwidth]{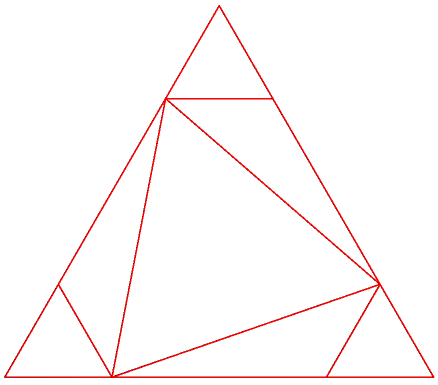}\quad \includegraphics[width=.22 \textwidth]{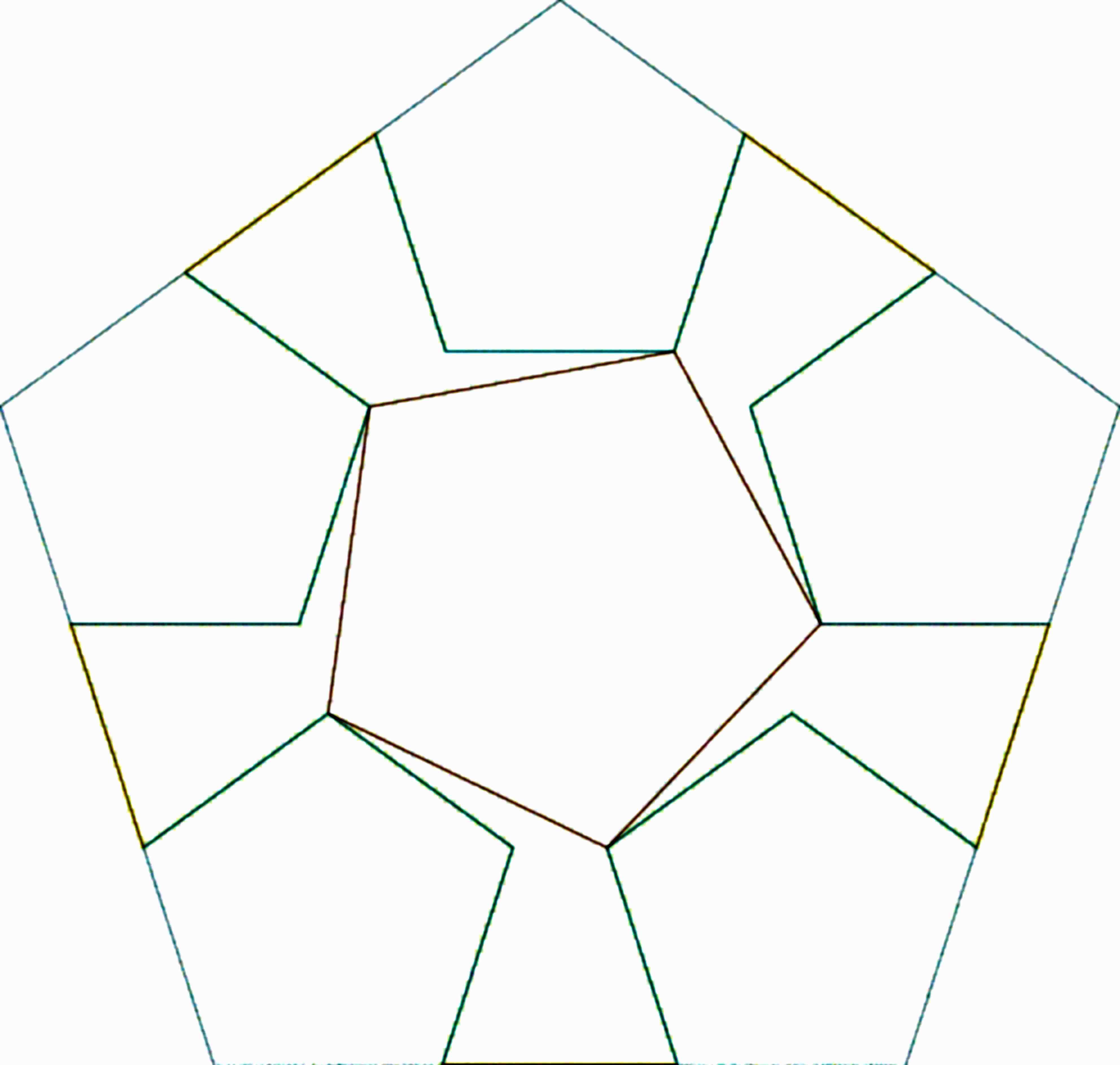}\quad\includegraphics[width=.18\textwidth]{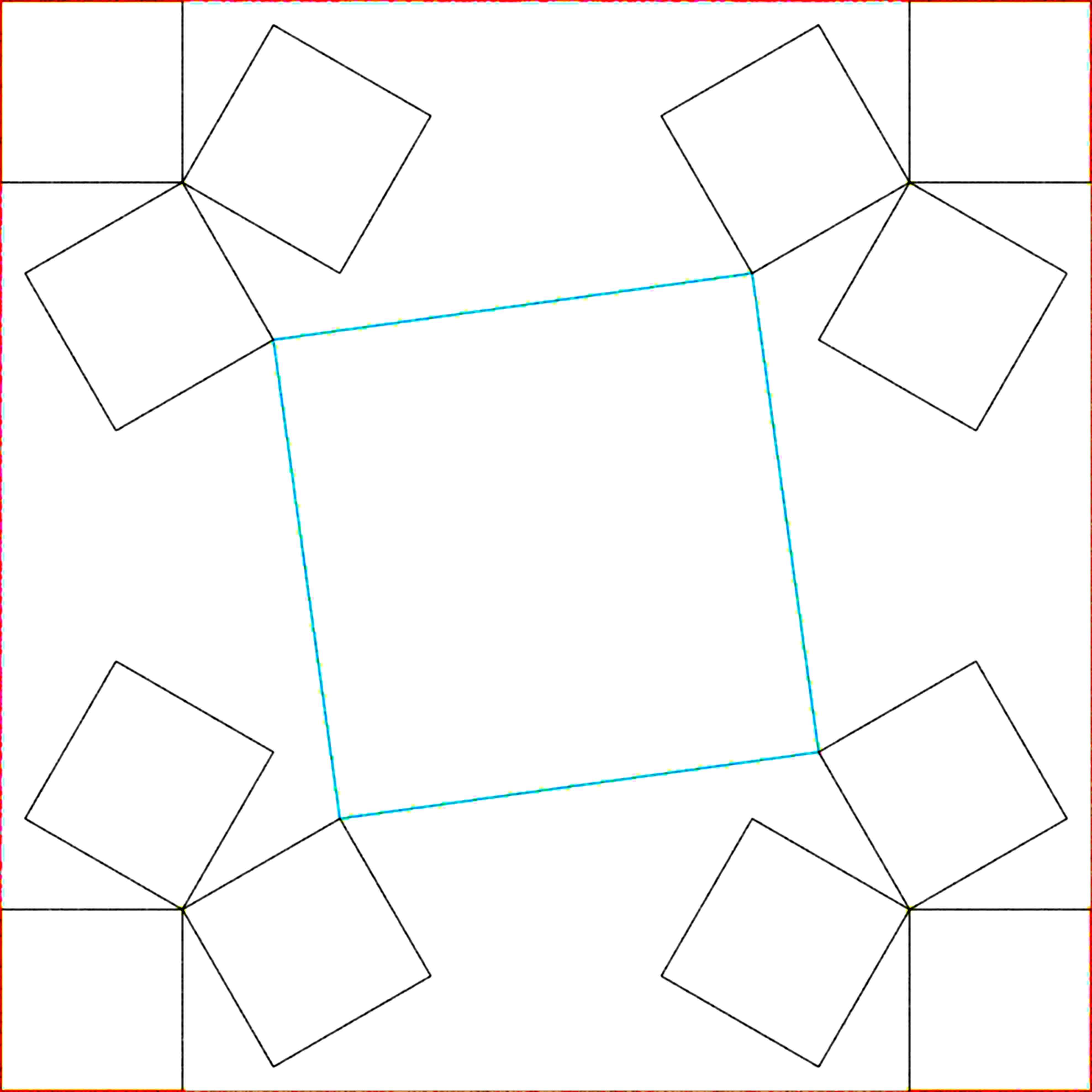}\quad \includegraphics[width=.2 \textwidth] {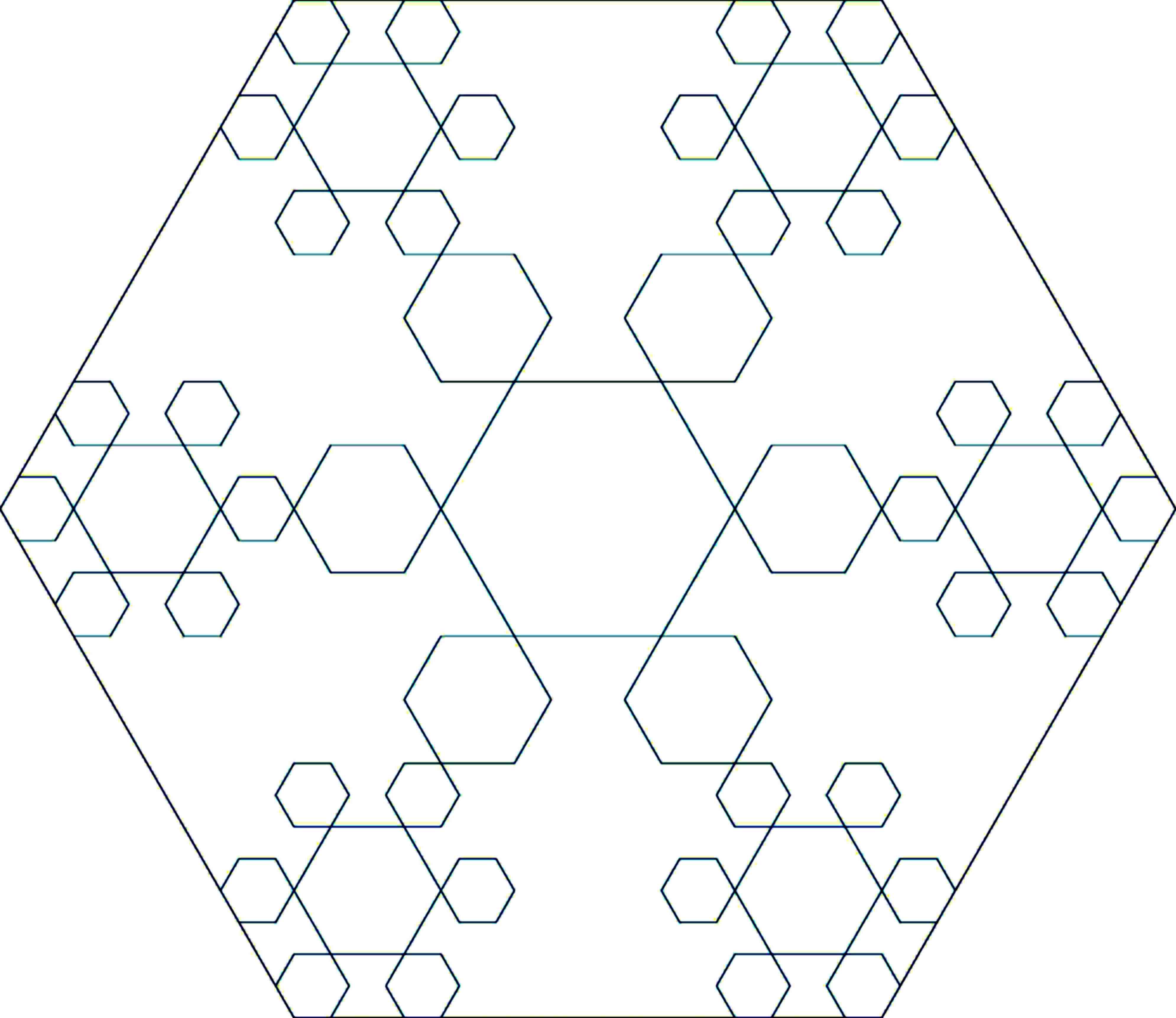}
\end{center}

For convenience we will call such systems symmetric polygonal systems or SPS, if this does not cause ambiguity in choice of $P$ and $G$.

\begin{theorem} \label{main1}
The attractor $K$ of a { symmetric} polygonal  system  and its main tree $\hat\ga$ are symmetric with respect to the group $G$.
\end{theorem}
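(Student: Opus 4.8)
The plan is to establish the two symmetry statements in turn: first $g(K)=K$ for every $g\in G$, and then $g(\hat\ga)=\hat\ga$, the latter following quickly from the former and the uniqueness of arcs in a dendrite. For the invariance of $K$, fix $g\in G$ and apply $g$ to the identity $K=\bigcup_{i\in I}S_i(K)$. The defining property of a symmetric polygonal system (Definition~\ref{sps}) lets us rewrite each composition $g\,S_i$ as $S_k\,h$ with $k\in I$ and $h\in G$ (both depending on $i$), so $g(K)$ is a finite union of sets of the form $S_k\bigl(h(K)\bigr)$, $k\in I$, $h\in G$. The crucial observation is that this rewriting can be iterated: applying it to each leftover symmetry $h\in G$ pushes $h$ through one more layer of maps, and the new leftover factors again lie in $G$. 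Hence, by induction on $n$,
\[
g(K)=\bigcup_{|\bj|=n}S_\bj\bigl(h_\bj(K)\bigr),\qquad h_\bj\in G ,
\]
a finite union over multiindices $\bj$ of length $n$.

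Now every $h\in G$ preserves the polygon $P$, and $K\subset P$ by {\bf(D1)}, so $S_\bj\bigl(h_\bj(K)\bigr)\subset S_\bj(P)=P_\bj$; moreover $P_\bj$ meets $K$, since it contains the nonempty set $K_\bj=S_\bj(K)\subset K$. Because $\diam P_\bj\le\bigl(\max_{i\in I}\Lip S_i\bigr)^{n}\diam P\to0$ as $n\to\8$, every point of $g(K)$ lies within $\bigl(\max_{i\in I}\Lip S_i\bigr)^{n}\diam P$ of $K$, for all $n$. Since $K$ is closed, this forces $g(K)\subset K$; applying the same argument to $g^{-1}\in G$ yields the reverse inclusion, so $g(K)=K$.

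For the main tree, recall that each $g\in G$, being a symmetry of $P$, permutes the vertices of $P$, say $g(A_i)=A_{\tau(i)}$ with $\tau=\tau_g$ a permutation of $\{1,\dots,n_P\}$. By the first part, $g$ restricts to a homeomorphism of the dendrite $K$ onto itself, and a homeomorphism of a dendrite carries the unique arc joining two of its points onto the unique arc joining their images; hence $g(\ga_{ij})=\ga_{\tau(i)\tau(j)}$. Taking the union over all pairs $i\ne j$ and using that $(i,j)\mapsto(\tau(i),\tau(j))$ permutes these pairs, we obtain $g(\hat\ga)=\hat\ga$. Since ramification points are preserved by homeomorphisms, $g$ also permutes the main ramification points of $K$.

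The one point that genuinely needs care is the inductive bookkeeping in the first part: one must check that the rewriting $g\,S_i=S_k\,h$ can be applied with an arbitrary $h\in G$ in place of $g$ at every stage, so that all residual factors remain confined to the finite group $G$ and each piece $S_\bj\bigl(h_\bj(K)\bigr)$ stays trapped in the shrinking cell $P_\bj$. Once that is in place, the shrinking of the cells $P_\bj$ does the rest, and everything else is routine.
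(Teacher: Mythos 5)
Your proof is correct and rests on the same key mechanism as the paper's: iterating the intertwining relation $g\cdot S_\bi=S_\bj\cdot g'$ from Definition~\ref{sps} so that all residual factors stay in the finite group $G$, using that every $h\in G$ preserves $P$, and then deducing $g(\hat\ga)=\hat\ga$ from the fact that $g$ permutes the vertices of $P$ and that arcs between points of a dendrite are unique. The only minor difference is the finishing step: the paper obtains the exact invariance $g(\wP^{(k)})=\wP^{(k)}$ of each prefractal and intersects over $k$ to get $g(K)=K$ at once, whereas you trap $g(K)$ in the shrinking cells $P_\bj$ to conclude $g(K)\subset K$ and then apply the same argument to $g^{-1}\in G$ for the reverse inclusion --- a slightly longer route to the same conclusion.
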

\begin{proof} Let $\eS=\{S_1,\ldots, S_m\}$. Take $g\in G$. The map $g^*:\eS \to \eS$, sending each $S_i$ to respective $S_j$ is a permutation of $\eS$, therefore
 $g(\UU_{i=1}^m S_i(P))=\UU_{i=1}^m S_i(P)$, or $g(\wP)=\wP$.

Moreover, it follows from the Definition \ref{sps} that for any $\bi=i_1 \ldots i_k$ there is such
$\bj=j_1 \ldots j_k$  and such $g'\in G$ that  $g\cdot S_\bi=S_\bj\cdot g'$. Therefore for any $g$, $g(\wP^k)=\wP^k$. Since $K=\bigcap\limits_{k=1}^\8 \wP^k$,
$g(K)=K$.  Since $g$ preserves the set of vertices of $P$, $g(\hat\ga)=\hat\ga$.\end{proof}
\begin{center} \includegraphics[width=.22 \textwidth]{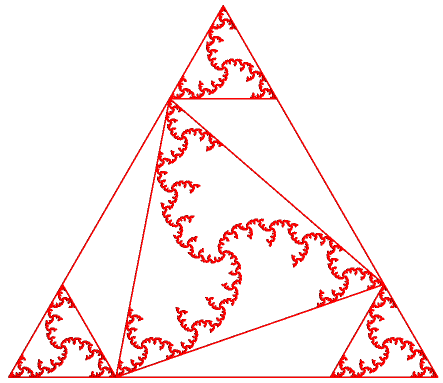}\quad \includegraphics[width=.22 \textwidth] {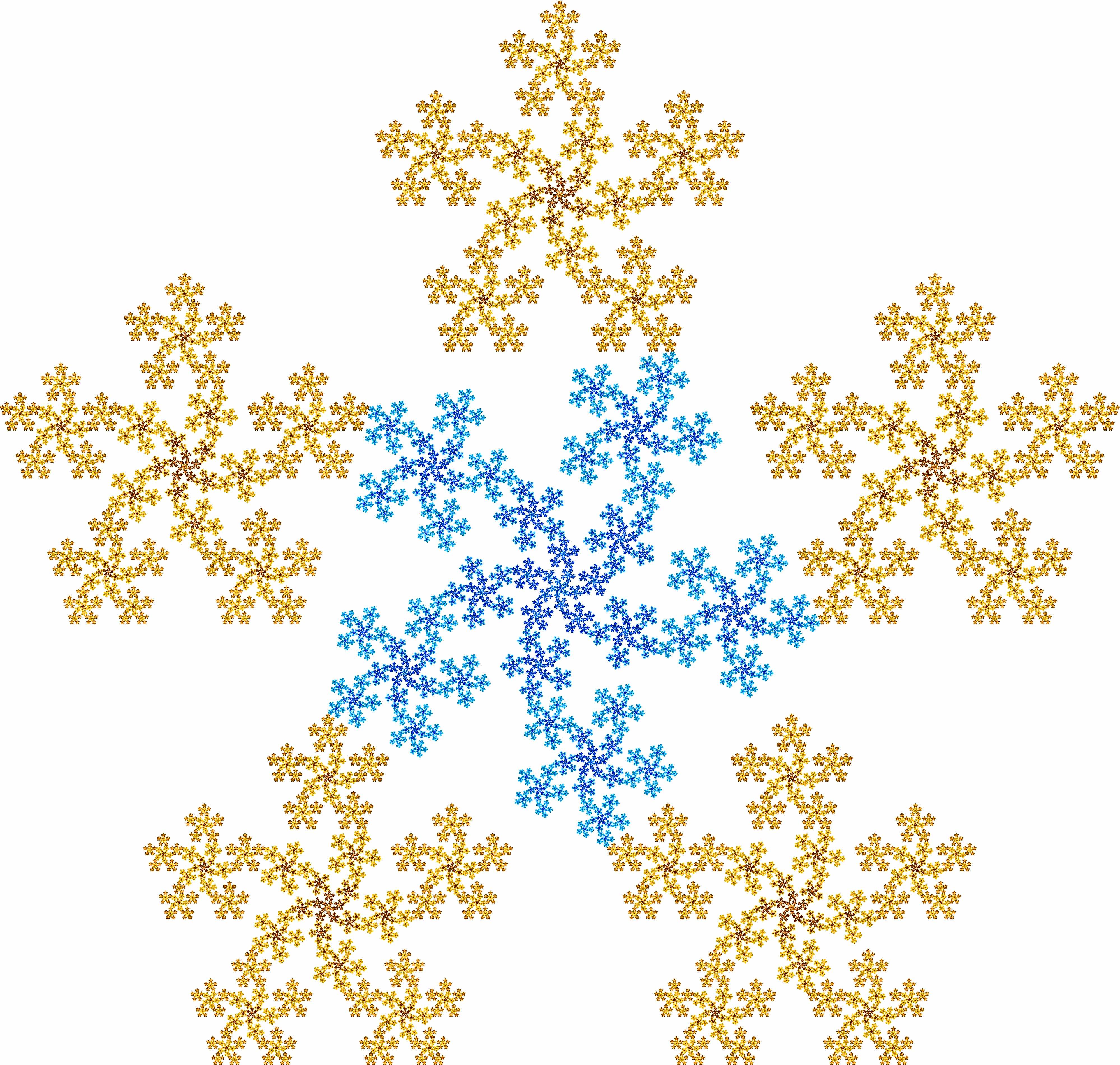}\quad \includegraphics[width=.18 \textwidth] {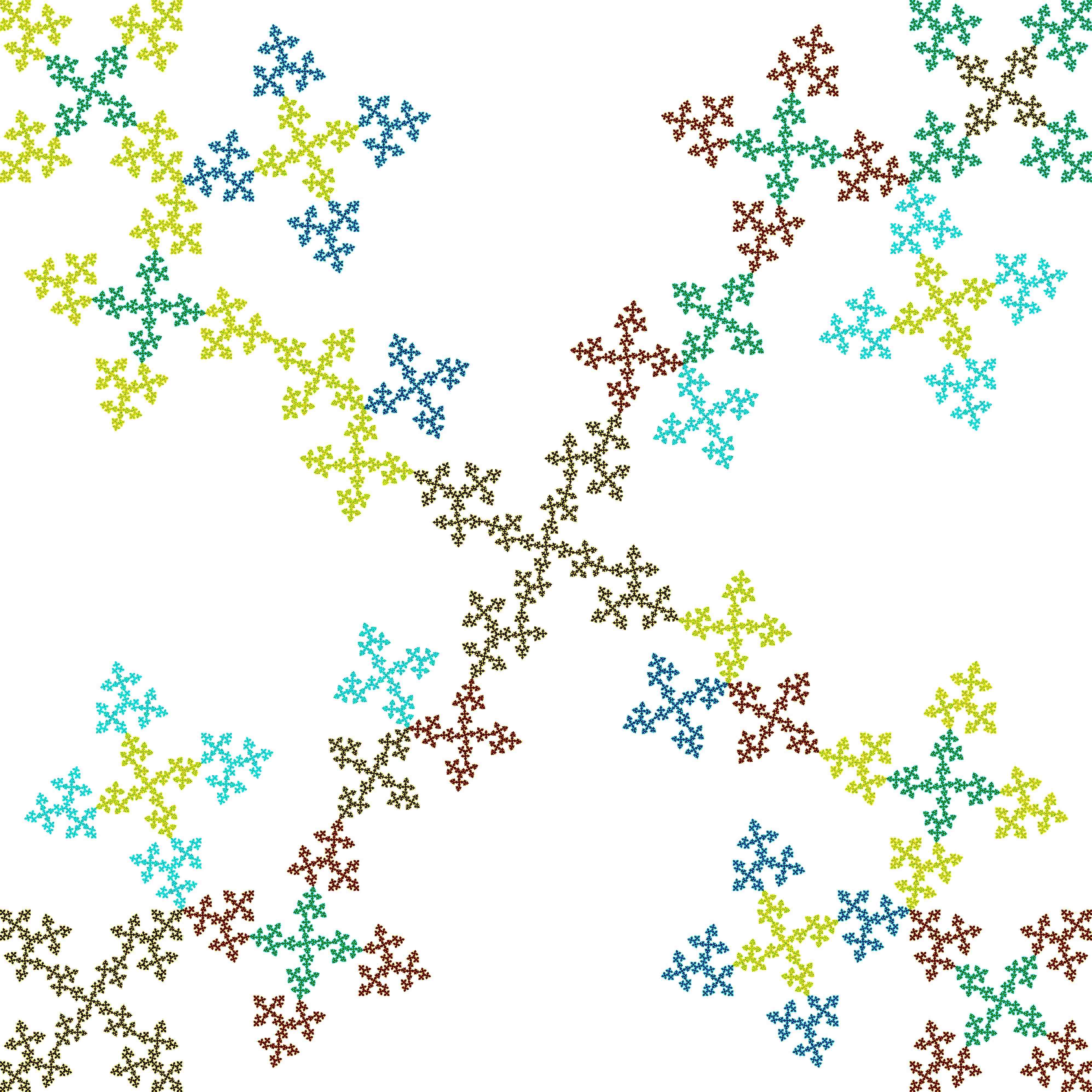}\quad \includegraphics[width=.22 \textwidth] {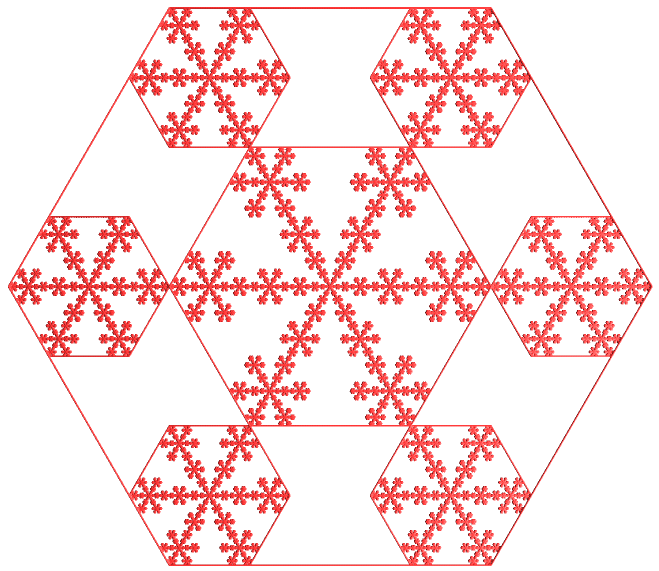}
\end{center}

\begin{corollary} If $\eS$ is a {$G$-symmetric} $P$-polygonal  system then  $\eS^{(n)}=\{S_\bj, \bj\in I^n\}$ is a {$G$-symmetric} $P$-polygonal system.\end{corollary}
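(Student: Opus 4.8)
The plan is to verify that the iterated system $\eS^{(n)}=\{S_\bj,\ \bj\in I^n\}$ again satisfies conditions {\bf D1}--{\bf D4} of Definition \ref{pts} together with the compatibility requirement of Definition \ref{sps}, using the \emph{same} polygon $P$ and the \emph{same} group $G$. First, each $S_\bj$ with $\bj\in I^n$ is a composition of $n$ contracting similarities, hence a contracting similarity, so $\eS^{(n)}$ is a finite system of the required kind. Condition {\bf D1} for $\eS^{(n)}$ follows by iterating {\bf D1} for $\eS$, since $P_{j_1\ldots j_n}=S_{j_1}(P_{j_2\ldots j_n})\IN S_{j_1}(P)=P_{j_1}\IN P$. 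Condition {\bf D3} follows by iterating {\bf D3}: given $A_k\in V_P$, choose successively $S_{j_1},\ldots,S_{j_n}\in\eS$ and vertices $A_{l_1},\ldots,A_{l_n}\in V_P$ with $S_{j_1}(A_{l_1})=A_k$ and $S_{j_s}(A_{l_s})=A_{l_{s-1}}$ for $s=2,\ldots,n$; then $S_{j_1\ldots j_n}(A_{l_n})=A_k$. Condition {\bf D4} for $\eS^{(n)}$ is immediate, because $\bigcup_{\bj\in I^n}P_\bj=T^n(P)=\wP^{(n)}$, which is contractible as recorded in the paragraph preceding Theorem \ref{main} (see also \cite{STV1,STV2,STV3}).

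The only step that needs a moment's care is {\bf D2}. Take distinct $\bi,\bj\in I^n$ and let $t$ be the first position at which they differ, so $\bi=\bu\,i_t i_{t+1}\ldots i_n$ and $\bj=\bu\,j_t j_{t+1}\ldots j_n$ with $i_t\neq j_t$ and $\bu$ their common initial segment (possibly empty). Then $P_\bi\cap P_\bj=S_\bu(P_{i_t\ldots i_n}\cap P_{j_t\ldots j_n})$, and since $P_{i_t\ldots i_n}\IN P_{i_t}$ and $P_{j_t\ldots j_n}\IN P_{j_t}$, condition {\bf D2} for $\eS$ gives $P_{i_t\ldots i_n}\cap P_{j_t\ldots j_n}\IN P_{i_t}\cap P_{j_t}$, which is empty or a single point $v$ that is a common vertex of $P_{i_t}$ and $P_{j_t}$. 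If it is the point $v$, then $S_{i_t}^{-1}(v)$ is a vertex of $P$ lying in the subpolygon $P_{i_{t+1}\ldots i_n}\IN P$; here one invokes the elementary convexity fact that an extreme point of $P$ which belongs to a convex subset $Q\IN P$ is an extreme point of $Q$, hence $S_{i_t}^{-1}(v)$ is a vertex of $P_{i_{t+1}\ldots i_n}$ and $v$ is a vertex of $P_{i_t\ldots i_n}$; the same argument applied to $\bj$ shows $v$ is also a vertex of $P_{j_t\ldots j_n}$. As $S_\bu$ is a similarity it carries vertices to vertices, so $P_\bi\cap P_\bj$ is either empty or the common vertex $S_\bu(v)$ of $P_\bi$ and $P_\bj$, which is exactly {\bf D2} for $\eS^{(n)}$.

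Finally, the compatibility condition of Definition \ref{sps} for $\eS^{(n)}$ is precisely the assertion already established inside the proof of Theorem \ref{main1}: for every $g\in G$ and every multiindex $\bj=j_1\ldots j_n$ there exist $g'\in G$ and $\bk=k_1\ldots k_n\in I^n$ with $g\cdot S_\bj=S_\bk\cdot g'$, obtained by applying the defining relation of Definition \ref{sps} $n$ times, one factor of $S_\bj$ at a time: $g\,S_{j_1}=S_{k_1}g_1$, then $g_1\,S_{j_2}=S_{k_2}g_2$, and so on, with all $g_s\in G$ and all $k_s\in I$; composing these yields $g\,S_\bj=S_{k_1\ldots k_n}\,g_n$. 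This establishes all the axioms, so $\eS^{(n)}$ is a $G$-symmetric $P$-polygonal system. In this argument nothing is genuinely hard: the symmetry relation is inherited verbatim from Theorem \ref{main1}, contractibility of $\wP^{(n)}$ is imported from the cited works, and the remaining content is bookkeeping, the sole subtlety being the convexity remark used to check {\bf D2}.
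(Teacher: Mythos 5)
Your proof is correct. Note that the paper offers no separate argument for this corollary: it is meant to follow from the relation established inside the proof of Theorem \ref{main1} (for every $g\in G$ and every multiindex $\bi$ there are $\bj$ of the same length and $g'\in G$ with $g\cdot S_\bi=S_\bj\cdot g'$), with the fact that the iterated system is again a contractible $P$-polygonal system taken as known from \cite{STV1,STV2,STV3}. Your write-up makes this implicit reasoning explicit and is sound throughout: D1 and D3 by iteration, D4 by identifying $\bigcup_{\bj\in I^n}P_\bj$ with $\wP^{(n)}$, whose contractibility the paper asserts in Section~\ref{sec:2}, and the symmetry condition by exactly the factor-by-factor argument of Theorem \ref{main1}. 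The one step the paper passes over in silence and you handle carefully is D2: showing that the single point $v$ of $P_{i_t}\cap P_{j_t}$, when it lies in the smaller cells $P_{i_t\ldots i_n}$ and $P_{j_t\ldots j_n}$, is actually a common vertex of them, via the extreme-point observation that a vertex of $P$ contained in a convex subset of $P$ is an extreme point of that subset; this is correct and is the only place where a genuine (if elementary) geometric fact is needed. So your route coincides with the paper's intended one, with the omitted bookkeeping supplied.
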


\begin{corollary} Suppose $\eS=\{S_1,\ldots, S_m\}$ is a {$G$-symmetric} $P$-polygonal  system, $K$ is the attractor of $\eS$,
 $g_1,\ldots,g_m\in G$ and  $\eS'=\{S_1g_1,\ldots,S_mg_m\}$. Then $K$ is the attractor of the system $\eS'$.
\end{corollary}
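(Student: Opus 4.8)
The plan is to reduce the whole statement to the $G$-invariance of the attractor, which is precisely Theorem \ref{main1}. First I would observe that each $g_i \in G$ is an isometry of $\rr^2$, so every composition $S_i g_i$ is again a contraction similarity with the same ratio as $S_i$. Hence $\eS' = \{S_1 g_1, \ldots, S_m g_m\}$ is a bona fide system of contraction maps on $\rr^2$, and by Hutchinson's Theorem it has a unique attractor $K'$. Consequently it suffices to prove that $K$ itself is a fixed point of the Hutchinson operator $T'(A) = \bigcup_{i=1}^m S_i g_i(A)$ associated with $\eS'$; uniqueness then forces $K = K'$.

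The second step is the computation of $T'(K)$. By Theorem \ref{main1}, since $\eS$ is a $G$-symmetric $P$-polygonal system, its attractor $K$ is $G$-invariant, i.e.\ $g(K) = K$ for every $g \in G$. In particular $g_i(K) = K$ for each $i$, so $S_i g_i(K) = S_i\bigl(g_i(K)\bigr) = S_i(K)$. Taking the union over $i \in I$ and using that $K$ is the attractor of $\eS$, one gets $T'(K) = \bigcup_{i=1}^m S_i g_i(K) = \bigcup_{i=1}^m S_i(K) = K$.

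Combining the two steps finishes the argument: $K$ is a nonempty compact subset of $\rr^2$ with $T'(K) = K$, and $K'$ is the only set with this property, so $K = K'$. I do not expect a real obstacle here; the single point to be careful about is that $\eS'$ need not itself be a contractible $P$-polygonal system (the pieces $S_i g_i(P)$ may overlap in ways violating \textbf{D1}--\textbf{D4}), so one should not attempt to re-establish the dendrite property for $\eS'$ — the claim is purely an equality of attractors, and the $G$-invariance of $K$ supplied by Theorem \ref{main1} does all the work.
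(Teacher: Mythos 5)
Your proof is correct, but it runs along a slightly different track than the paper's. The paper stays at the level of the prefractal approximations: it sets $\wP' = \bigcup_{i=1}^m S_i g_i(P)$, notes $g_i(P)=P$ so that $\wP'=\wP$, concludes ${\wP}^{'(k)}={\wP}^{(k)}$ for all $k$ (a step that tacitly uses the $G$-invariance $g(\wP^{(k)})=\wP^{(k)}$ established inside the proof of Theorem \ref{main1}), and then identifies both attractors with $\bigcap_k \wP^{(k)}$. You instead work directly with the attractor: you check that each $S_ig_i$ is a contraction similarity, use Theorem \ref{main1} in the form $g_i(K)=K$ to get $T'(K)=\bigcup_i S_ig_i(K)=\bigcup_i S_i(K)=K$, and conclude $K=K'$ by the uniqueness part of Hutchinson's theorem. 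Both arguments ultimately rest on the $G$-invariance coming from Theorem \ref{main1}; yours has the advantage of being a one-line fixed-point verification that avoids the paper's slightly terse jump from $\wP'=\wP$ to ${\wP}^{'(k)}={\wP}^{(k)}$, while the paper's version gives the extra (occasionally useful) information that the two systems produce identical approximating sets $\wP^{(k)}$ at every level, not just the same limit. Your closing remark that $\eS'$ need not satisfy {\bf D1}--{\bf D4} and that no dendrite property needs to be re-proved is also a sensible precaution.
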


\begin{proof}Let $K'$ be the attractor of the system $\eS'$ and put $\wP' = \UU_{i=1}^m (S_i \circ g_i (P))$ . 
Observe that for any $i$, $g_i (P)=P$, therefore $\wP'=\wP$ and ${\wP}^{'(k)}={\wP}^{(k)}$.
Then $K'=\bigcap\limits_{k=1}^\8 \wP^{'(k)} =K$.\end{proof}

\begin{definition} Let $\eS=\{S_1,\ldots, S_m\}$ be a {$G$-symmetric} $P$-polygonal  system. The system $ \widetilde\eS=\{S_i\cdot g, S_i\in \eS, g\in G\}$ is called the {\em augmented system} for $\eS$.\end{definition}
The system $ \widetilde\eS$   has the same attractor $K$ as $\eS$  and generates the augmented semigroup $G(\widetilde\eS)$ consisting of all maps of the form $S_\bj\circ g_i$, where $g_i\in G$. 


\subsection{The case of regular polygons}
\label{subsec:3}

\begin{proposition}\label{npod} Let $P$ be a regular n-gon and $G$ be the rotation group of $P$. Then the center $O$ of $P$ is the only ramification point of the main tree and $Ord(O,\hat\ga)=n$.
\end{proposition}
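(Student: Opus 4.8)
The plan is to exploit the rotational symmetry together with the structure theorem for the main tree (Proposition~\ref{comp}). Since $G$ is the full rotation group of the regular $n$-gon, it acts transitively on the vertex set $V_P = \{A_1,\ldots,A_n\}$ and fixes only the center $O$. By Theorem~\ref{main1}, the main tree $\hat\ga$ is $G$-invariant, so the rotation $\rh$ by $2\pi/n$ permutes the arcs $\ga_{ij}$ according to the induced permutation of indices.

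First I would show that $O \in \hat\ga$ and that $O$ is a ramification point of order at least $n$. Pick any vertex, say $A_1$, and consider the arc $\ga_{1i}\subset\hat\ga$ from $A_1$ to some other vertex $A_i$; this arc is nondegenerate, so $\hat\ga$ is a nondegenerate subcontinuum. The key point is that $\hat\ga$, being $G$-invariant and a dendrite (hence uniquely arcwise connected), must contain the unique fixed point $O$ of the action: if $O\notin\hat\ga$, take any $x\in\hat\ga$; by part~(a) of Proposition~\ref{comp}, $\hat\ga = \bigcup_j \ga_{A_j x}$, and one can argue that the ``center of mass'' of the orbit structure forces a fixed point into the tree — more carefully, the map $x\mapsto$ (the median/center of the orbit $\{gx : g\in G\}$ in the dendrite $\hat\ga$) is $G$-equivariant and its value is a $G$-fixed point of $\hat\ga$, which by uniqueness of fixed points in the plane under a nontrivial rotation must be $O$. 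Hence $O\in\hat\ga$. Now the $n$ arcs $\ga_{A_1 O}, \ga_{A_2 O},\ldots,\ga_{A_n O}$ are permuted cyclically by $\rh$; since distinct vertices $A_j$ lie in distinct components of $\hat\ga\setminus\{O\}$ (no two of the $A_j$ can be separated from $O$ together, as $\rh$ would then have to fix that separating structure), these $n$ arcs emanate from $O$ into $n$ distinct components, giving $Ord(O,\hat\ga)\ge n$. Combined with part~(a) of Proposition~\ref{comp}, which says $\hat\ga = \bigcup_{j=1}^n \ga_{A_j x}$ for every $x$ — in particular for $x=O$ — we get that $\hat\ga\setminus\{O\}$ has exactly $n$ components, so $Ord(O,\hat\ga)=n$.

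Next I would rule out any other ramification point. Suppose $z\in RP(\hat\ga)$ with $z\ne O$. Then the whole orbit $\{\rh^k z : k=0,\ldots,n-1\}$ consists of $n$ distinct ramification points, each of order $\ge 3$ (here using that $z\ne O$ forces the orbit to have full size $n$, since the stabilizer of any non-central point is trivial). But $\hat\ga$ is a finite topological tree all of whose endpoints lie in $V_P$, so it has at most $n$ endpoints, and a tree with $e$ endpoints has $\sum_{v}(Ord(v)-2) = e - 2$ summed over branch points; with $O$ already contributing $n-2$ to this sum, there is no room left for any further branch point unless $n\le 2$. This contradiction shows $RP(\hat\ga)=\{O\}$.

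The main obstacle is the first step: establishing cleanly that the rotation-invariant subdendrite $\hat\ga$ must contain the center $O$, i.e.\ producing a canonical $G$-fixed point inside $\hat\ga$. The cleanest route is the standard fact that a finite group acting on a dendrite (or on an $\rr$-tree) always has a fixed point — applied to the action of $G$ on the dendrite $\hat\ga$ — and then identifying that fixed point with $O$ via the fact that a nontrivial planar rotation has a unique fixed point. One should also double-check that the orbit of a non-central branch point really has full cardinality $n$ and that the Euler-characteristic/endpoint count for finite trees is applied correctly when several branch points could a priori coincide under the symmetry; the rotation group being cyclic and acting freely off $O$ makes both of these routine.
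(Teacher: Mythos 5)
Your route is genuinely different from the paper's: the paper proves that $O$ must be a vertex of the finite tree $\hat\ga$ by a mod-$n$ counting argument (orbits of edges and of non-central vertices under the rotation $f$ have length $n$, while $V=E+1$ for a tree), and then writes $\hat\ga$ as the union of the $n$ rotated copies of $\ga_{OA_n}$; you instead invoke the fixed-point property of dendrites/finite trees to get $O\in\hat\ga$, and finish with the Euler-type count $\sum_{v\in RP(\hat\ga)}(Ord(v,\hat\ga)-2)=e-2$ together with $EP(\hat\ga)\IN V_P$ (so $e\le n$) and the fact that a non-central ramification point would have an orbit of length $n$. Those two ingredients are sound, and the closing count is arguably a cleaner way to get uniqueness of the ramification point than the paper's final sentence.

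There is, however, a genuine gap at the step on which everything rests: the claim that distinct vertices $A_j$ lie in distinct components of $\hat\ga\setminus\{O\}$, justified only by ``no two of the $A_j$ can be separated from $O$ together, as the rotation would then have to fix that separating structure.'' That is not an argument, and no argument using the rotational symmetry alone can work: abstractly there are finite trees with a cyclic action of order $4$ in which $A_1,A_3$ hang off a node $y_1$, $A_2,A_4$ hang off a node $y_2$, and the unique fixed point $O$ is the midpoint of the edge $y_1y_2$, so that $Ord(O)=2$, the generator cyclically permutes the $A_i$ and swaps $y_1,y_2$, and the only ramification points are $y_1,y_2$. In such a configuration your Euler count cannot help, because $O$ is not a branch point at all; so $Ord(O,\hat\ga)\ge n$ is exactly what is left unproved. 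What rules this out is planarity, which your proposal never uses: $\hat\ga\IN P$ and the $A_j$ lie in convex position on $\dd P$. Concretely, the components of $\hat\ga\setminus\{O\}$ are permuted by $f$, each contains a vertex (its non-$O$ endpoints are endpoints of $\hat\ga$, hence lie in $V_P$), so the induced partition of $\{A_1,\dots,A_n\}$ is a block system for the regular $\zz_n$-action, i.e.\ consists of the residue classes mod some divisor $d$; $d=1$ is impossible since then $O$ would be an endpoint of $\hat\ga$, and if $2\le d\le n/2$ the arcs joining interleaved pairs such as $A_1,A_{1+d}$ and $A_2,A_{2+d}$ lie in disjoint components, avoid $O$, and yet must cross inside the convex polygon $P$ because their endpoints interleave on $\dd P$ --- a contradiction. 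Adding this (or replacing the step by the paper's counting argument, which makes $O$ a vertex of $\hat\ga$, hence a point of $RP(\hat\ga)$ since $O\notin V_P$) is necessary to make your proof complete; the vague appeal to the rotation ``fixing the separating structure'' cannot be repaired without such a planar-separation input.
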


\begin{center}
\includegraphics[width=.22 \textwidth]{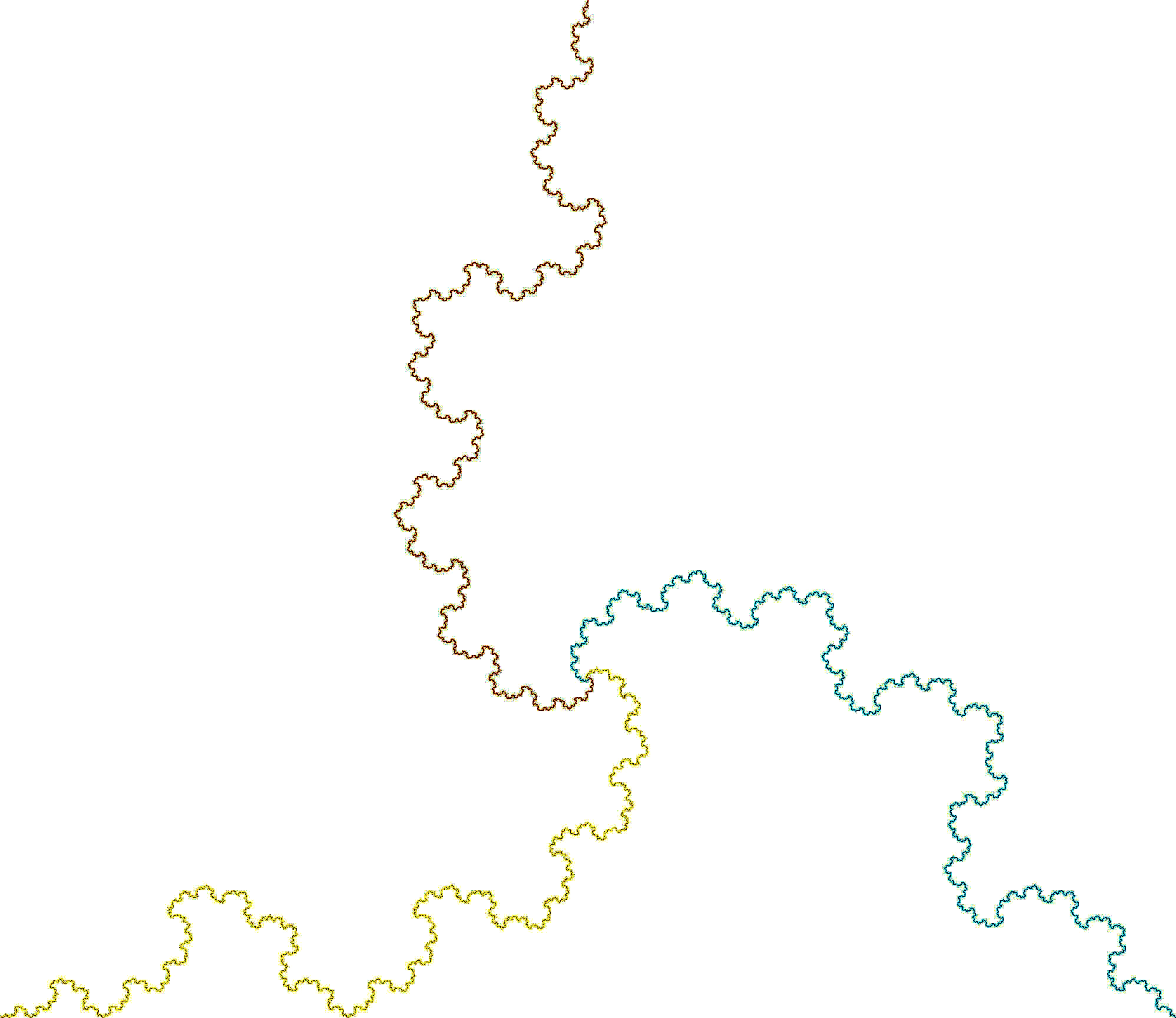}\quad\includegraphics[width=.22 \textwidth]{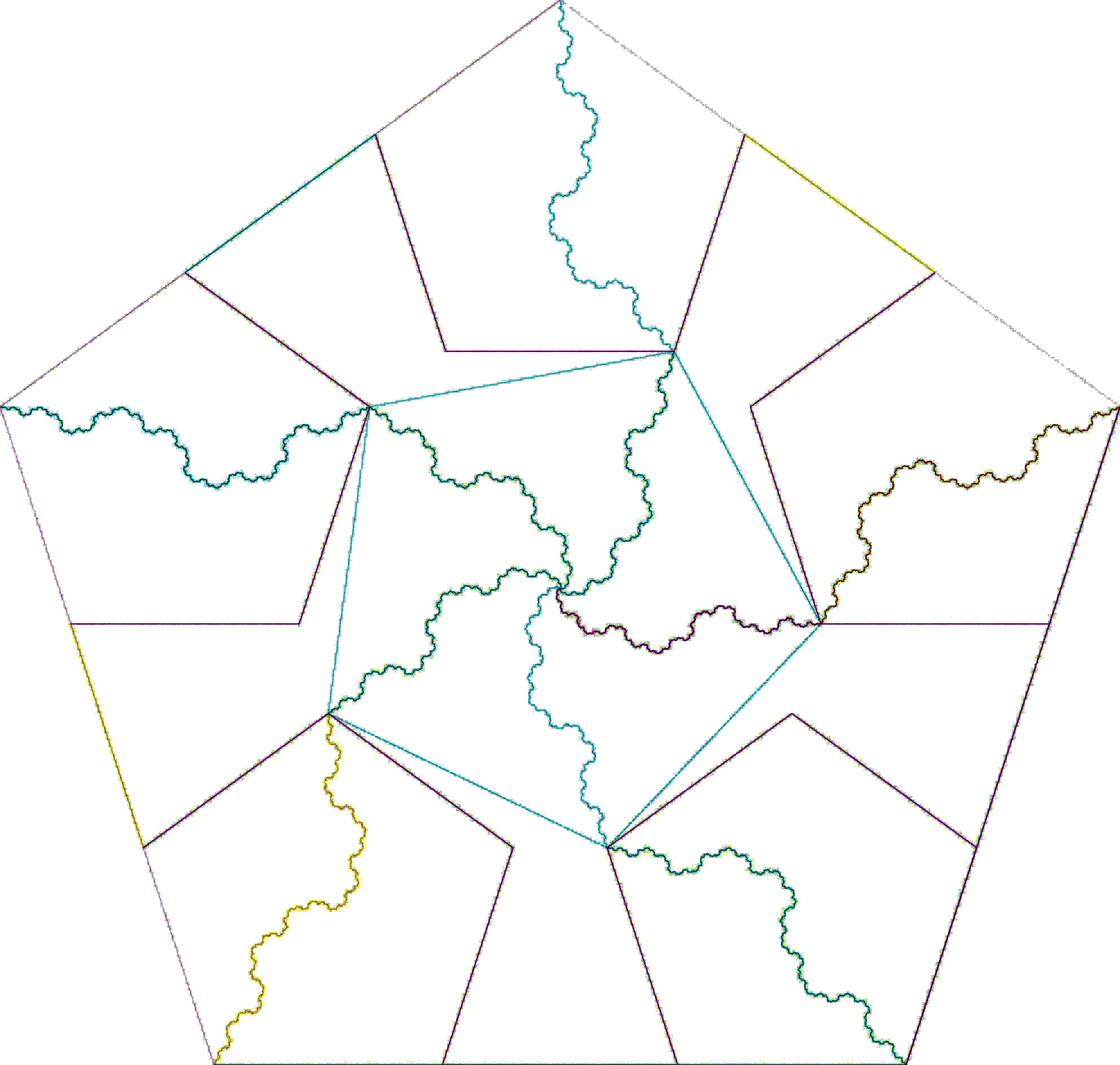}\quad \includegraphics[width=.22 \textwidth]{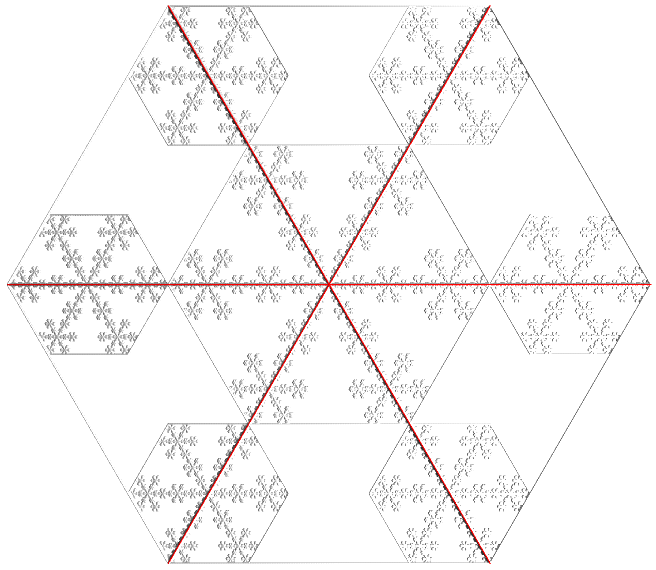}
\end{center}

\begin{proof}
Consider the main tree $\hat\ga$. It is a fine finite system \cite{Kig95}, which is invariant with respect to  $G$. Let $f$ be the rotation of $P$ in the angle $2\pi/n.$\\

Suppose $V$ and $E$ be the number of vertices and edges respectively of the main tree. For any  edge  $\lambda
\subset \hat\ga$, $f(\lambda)\cap\lambda$ is either empty, 
or is equal to $\{O\}$, and in the latter  case $O$ is the endpoint of both $\la$ and $f(\la)$. 
In each case all the edges $f^k(\la)$ are different.
 Therefore $E$ is a multiple of $n$. \\

If $A'$ is a vertex of $\hat\ga$ and $A'\neq O$, then all the points  $f^k(A'), k=1,...,n$ are different, so the number of vertices of $\hat\ga$, different from $O$, is also a multiple of $n$.\\

Since $\hat\ga$ is a tree, $V = E +1$. Therefore the set of vertices contains $O$, which is the only 
invariant point for  $f $.   Denote the unique subarc of $\hat\ga$ with endpoints $O$ and  $A_k$ by $
\ga_k$.  Then for any $k=1,...,n$,  $\ga_k=f^k(\ga_n)$. By Proposition \ref{comp}  $\bigcup \limits _{k=1} ^n \ga_k = \hat\ga$. Thus the center  $O$ is  the only 
ramification point of $ \hat\ga$ and $Ord(O, \hat\ga) = n$.\end{proof}

\begin{corollary}
All vertices of the polygon $P$ are the end points of the main tree.
\end{corollary}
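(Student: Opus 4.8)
The plan is to read the statement directly off the explicit $n$-pod description of $\hat\ga$ furnished by Proposition \ref{npod}: namely that $\hat\ga=\UU_{k=1}^{n}\ga_k$, where $\ga_k$ is the arc joining the center $O$ to the vertex $A_k$, that $O$ is the unique ramification point of $\hat\ga$, and that $Ord(O,\hat\ga)=n$. It then suffices to check that each vertex $A_k$ has order $1$ in $\hat\ga$; equivalently, that $A_k\notin CP(\hat\ga)$, after which Proposition \ref{comp}(c) identifies $A_k$ as an end point.

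First I would record how the arms $\ga_k$ meet one another. Since $Ord(O,\hat\ga)=n$, the set $\hat\ga\mmm\{O\}$ has exactly $n$ connected components. Each $\ga_k\mmm\{O\}$ is connected, hence contained in a single component; as the $n$ sets $\ga_k\mmm\{O\}$ cover $\hat\ga\mmm\{O\}$ and there are precisely $n$ components, the assignment sending $k$ to the component containing $\ga_k\mmm\{O\}$ is a bijection. Consequently $\ga_i\cap\ga_j=\{O\}$ whenever $i\neq j$.

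Next, fix a vertex $A_k$ and consider $\hat\ga\mmm\{A_k\}=(\ga_k\mmm\{A_k\})\cup\UU_{j\neq k}\ga_j$. Here $\ga_k\mmm\{A_k\}$ is a (nondegenerate) half-open subarc still containing $O$, hence connected, and every $\ga_j$ also contains $O$; so the whole union is connected, i.e. $A_k$ is not a cut point of $\hat\ga$. Since $\hat\ga$ is a finite dendrite, each of its points is either a cut point or an end point, and $A_k\neq O$ cannot be the unique ramification point, so $Ord(A_k,\hat\ga)\le 2$; not being a cut point forces $Ord(A_k,\hat\ga)=1$, i.e. $A_k\in EP(\hat\ga)$ (which also matches Proposition \ref{comp}(c)).

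There is essentially no obstacle here: the only step needing a moment's care is the second paragraph, where one must argue that distinct arms of the $n$-pod share nothing but the center $O$, and even that follows at once from counting the components of $\hat\ga$ at $O$. The corollary is, in effect, just a reformulation of the $n$-pod structure already established.
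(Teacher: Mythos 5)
Your argument is correct and follows essentially the same route as the paper: both read the claim off the $n$-pod structure of Proposition \ref{npod}, the paper citing Proposition \ref{comp} for the fact that a vertex met by a unique arm $\ga_k$ has order $1$, while you spell out the same point directly (arms meet only at $O$, so deleting $A_k$ leaves $\hat\ga$ connected, and the cut point/end point dichotomy for dendrites finishes it). Note only that the paper's proof goes one step further than the stated corollary, using $\#\pi^{-1}(A_k)=1$ and Theorem \ref{order} to conclude $Ord(A_k,K)=1$, i.e.\ that the vertices are end points of the dendrite $K$ as well, a fact used later; your proof establishes the literal statement about $\hat\ga$ but not this addendum.
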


\begin{proof} For any $k=1,...,n$ there is  an unique arc $\ga_k$ of the main tree meeting the vertex $A_k$ of the polygon $P$, so $Ord(A_k, \hat\ga) = 1$ by Proposition \ref{comp}. 

Since all the vertex angles of $P$ are equal, for each vertex $A_k$ of $P$, there is unique $S_k\in\eS$ such that 
$P_k=S_k(P)\ni A_k$, so $\#\pi^{-1}(A_k)=1$ and   by Theorem \ref{order}, $Ord(A_k, K)=Ord(A_k, \hat\ga) = 1$.

Then all vertices of the polygon $P$ are the end points of the main tree as well as of the dendrite $K$.\end{proof}

\begin{lemma}\label{gaon}
Each arc $\ga_{k}$ is the attractor of a Jordan zipper.
 \end{lemma}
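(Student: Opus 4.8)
The plan is to show that each arc $\ga_k$ carries a natural self-similar structure inherited from the semigroup $G(\widetilde\eS)$, and that this structure is precisely that of a Jordan zipper in the sense of the definition above. First I would locate $\ga_k$ inside the first-level decomposition $\wP = \bigcup_i P_i$: since $\ga_k$ is an arc from the center $O$ to the vertex $A_k$, and by condition (D3) together with the corollary above each vertex $A_k$ is the endpoint of exactly one piece $P_{s(k)} = S_{s(k)}(P)$, the arc $\ga_k$ must pass through a sequence of pieces $P_{j_1}, P_{j_2}, \dots, P_{j_m}$, entering each through a vertex and leaving through a vertex, consecutive pieces meeting in a single common vertex (by (D2)). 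This gives an ordered chain of subarcs $\ga_k \cap P_{j_l}$, and the key claim is that each such subarc is the image under $S_{j_l}$ (composed with a suitable symmetry $g_l \in G$) of some $\ga_{k'}$ — in fact, since all arcs $\ga_{k'}$ are rotates of one another by Proposition \ref{npod}, each subarc is a scaled rotated copy of the whole $\ga_k$ itself.

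The second step is to make this precise by intersecting with the main tree. By Theorem \ref{main} and Proposition \ref{npod}, $\hat\ga = \bigcup_{k=1}^n \ga_k$ with all $\ga_k$ meeting only at $O$, and $\hat\ga$ is $G$-invariant. For a piece $P_j = S_j(P)$ with $O \notin P_j$, the set $S_j(\hat\ga) \cap \hat\ga$ is a subarc of $\hat\ga$ lying in $P_j$; I would argue that $\hat\ga \cap P_j$ is exactly $S_j(\ga_{k'})$ for the appropriate $k'$, using that $S_j(\hat\ga)$ is itself an $n$-pod centered at $S_j(O)$, only one leg of which can reach out of $P_j$ toward $O$ (by the cut-point analysis of Theorem \ref{order} and the acyclicity of the dendrite). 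Running along $\ga_k$ from $O$ to $A_k$ and recording which $P_{j_l}$ it traverses, the subarcs $\ga_k \cap P_{j_l}$ are the successive copies $S_{j_l}(\ga_{k_l})$, and since each $\ga_{k_l} = f^{k_l}(\ga_n)$ is itself similar to $\ga_k$, we obtain maps $T_l := S_{j_l} \circ f^{k_l - k}$ (or a similar correction by a rotation) each of which is a contraction similarity carrying $\ga_k$ onto the $l$-th subarc. Setting $z_0 = O$, $z_m = A_k$, and letting $z_1, \dots, z_{m-1}$ be the consecutive common vertices where the pieces meet, the system $\{T_1, \dots, T_m\}$ satisfies $T_l(z_0) = z_{l-1}$ and $T_l(z_m) = z_l$ (after possibly reversing the orientation of some $T_l$, which is the role of the signature $\ep$), so it is a zipper with attractor $\ga_k$.

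Finally, to conclude it is a \emph{Jordan} zipper, I would invoke Theorem \ref{Jordan}: each $T_l$ is injective (it is a similarity), and I must check that $\ga_k \cap T_l(\ga_k)$ intersect in a single point when $|l - l'| = 1$ and are disjoint otherwise. This follows because $T_l(\ga_k) = \ga_k \cap P_{j_l}$ and the pieces $P_{j_l}$ form a chain along $\ga_k$: consecutive ones share exactly the vertex $z_l$ (the common vertex from (D2)), and non-consecutive pieces along the arc are separated — any common point would create a loop in the dendrite $K$, contradicting Theorem \ref{main}, or would fail (D2). Hence Theorem \ref{Jordan} applies and $\ga_k$ is a Jordan arc which is the attractor of the Jordan zipper $\{T_1, \dots, T_m\}$.

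The main obstacle I anticipate is the bookkeeping in the second step: showing rigorously that $\ga_k \cap P_{j_l}$ is a single arc (not several), that it equals an honest similar copy of some $\ga_{k'}$ (which requires knowing $S_j(O) \in \ga_k$ or at least that the leg of the sub-$n$-pod pointing toward $O$ coincides with $\ga_k \cap P_j$), and that the chain of pieces is finite and consecutive pieces meet in exactly one point. All of these should follow from the dendrite property, the polygonal axioms (D1)–(D4), and Theorem \ref{order}, but assembling them into the exact combinatorial data $(z_0, \dots, z_m)$ and signature $\ep$ required by the zipper definition is where the care is needed; the contraction and injectivity of the maps $T_l$, by contrast, are immediate.
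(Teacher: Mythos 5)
Your overall plan (follow the chain of first-level pieces along $\ga_k$, exhibit the subarcs as images of legs of the main tree under maps from the augmented semigroup, then invoke Theorem \ref{Jordan}) is the same strategy as the paper's, but your key structural claim in the second step is wrong, and fixing it changes the zipper itself. You assert that each subarc $\ga_k\cap P_{j_l}$ is a \emph{single} scaled rotated copy of $\ga_k$, i.e.\ $S_{j_l}\circ g_l(\ga_{k'})$ with $g_l\in G$. Any such image is a leg of the sub-$n$-pod $S_{j_l}(\hat\ga)$ and has one endpoint at the sub-center $S_{j_l}(O)$, which (for $n>3$) is \emph{not} a vertex of $P_{j_l}$. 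But for every piece of the chain except the one containing $O$, the arc enters $P_{j_l}$ through one vertex and leaves (or terminates at $A_k$) through another vertex, so $\ga_k\cap P_{j_l}$ runs from vertex to vertex of $P_{j_l}$ and passes through $S_{j_l}(O)$: it is the union of \emph{two} legs, $S_{l_k}\bigl(f^{i_k}(\ga_n)\cup f^{j_k}(\ga_n)\bigr)$, not one. Consequently your system $\{T_1,\dots,T_m\}$ with one map per piece does not have $\ga_k$ as its attractor; the correct zipper has two maps per such piece, namely $\{S_0,S_{l_1}f^{i_1},S_{l_1}f^{j_1},\dots,S_{l_s}f^{i_s},S_{l_s}f^{j_s}\}$ as in the paper, and the adjacency check for Theorem \ref{Jordan} then alternates between meetings at sub-centers $S_{l_k}(O)$ (two maps inside one piece) and at the shared vertices of consecutive pieces.

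Two further ingredients of the paper's proof are missing from your outline. First, the existence of the map $S_0\in\eS$ with $S_0(O)=O$ anchoring the zipper at $O$ is not automatic: one shows that the piece $P_0\ni O$ cannot have $O$ as a vertex (otherwise $P_0$ and $f(P_0)$ would meet in more than one point, violating (D2)), whence $f(P_0)=P_0$ and $S_0$ fixes $O$ — this argument only works for $n>3$. Second, the case $n=3$ is genuinely exceptional: $O$ may be a vertex of some triangle $S_{l_1}(P)$, there may be no map of $\eS$ fixing $O$, and the zipper must be assembled without the map $S_0$; your proposal treats the central piece like all the others and does not address this case at all.
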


\begin{proof} We prove the statement for the arc $\ga_n$, because for 
$\ga_k=f^k(\ga_n)$ it follows automatically.

 If $n>3$,
there is a similarity $S_0\in\eS$, whose fixed point is $O$. Indeed, there is some $S_0\in\eS$ for which $P_0=S_0(P)\ni O$. The point $O$ cannot be the vertex of $P_0$, 
otherwise  the polygons $f(P_0)$ and $P_0$ would intersect more than in one point. Therefore $f(P_0)=P_0$ and 
$S_0(O)=O$.

Observe that for any two vertices $A_i,A_j$ of $P$, the arc $\ga_{A_iA_j}$ is the union
$\ga_i\cup\ga_j$.

There is an unique chain of subpolygons $P_{l_k}=S_{l_k}(P), k=0, \ldots,s$ connecting $P_0$ and $P_n$ and 
containing $\ga_n$, where  $S_ {l_0}=S_0$ and  $S_ {l_s}=S_n$.
For each $k=1,\ldots,s$, there are $i_k$ and $j_k$ such that  $\ga_n\cap P_{l_k}=S_{l_k}(f^{i_k}(\ga_n)\cup f^{j_k}(\ga_n))$. 
Therefore $$\ga_n=\bigcup\limits_{k=1}^sS_{l_k}\left(f^{i_k}(\ga_n)\cup f^{j_k}(\ga_n)\right)\cup S_0(\ga_n).$$
The arcs on the right hand satisfy the conditions of Theorem  \ref{Jordan}, therefore the system $$\{S_0,S_{l_1}f^{i_1},S_{l_1}f^{j_1},...,S_{l_s}f^{i_s},S_{l_s}f^{j_s}\}$$ is a Jordan zipper whose attractor is  a Jordan arc with endpoints $O$ and $A_n$.

If $n=3$, it is possible that for some $l_1$, $O$ is a vertex of a triangle $S_{l_1}(P)$ and there is an unique chain of subpolygons $P_{l_k}=S_{l_k}(P), k=1, \ldots,s$, where $S_ {l_s}=S_3$. Repeating the same argument, we get a system
$\{S_{l_1}f^{i_1},S_{l_1}f^{j_1},...,S_{l_s}f^{i_s},S_{l_s}f^{j_s}\}$ is a Jordan zipper whose attractor is  a Jordan arc with endpoints $O$ and $A_3$.\end{proof}

\begin{corollary} If $P$ is a regular  n-gon and the symmetry group $G$ of the system $\eS$ is the dihedral group $D_n$ then
$\ga_{OA_i}$ is the line segment and the set of cut points of $K$ has dimension 1.
\end{corollary}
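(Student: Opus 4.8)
The plan is to exploit the reflections contained in $D_n$, which were unavailable in Proposition~\ref{npod}, where $G$ was only the rotation group. For each vertex $A_i$ of $P$ the dihedral group $D_n$ contains a reflection $\sigma_i$ whose axis is the line $\ell_i$ through $O$ and $A_i$: for odd $n$ every reflection axis joins a vertex to the midpoint of the opposite side, for even $n$ the vertex axes join opposite pairs of vertices, and in either case each vertex lies on exactly one such axis, all of which pass through $O$. In particular $\sigma_i$ fixes both endpoints $O$ and $A_i$ of the arc $\ga_{OA_i}$.

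The \emph{main step} is to show that $\sigma_i$ fixes $\ga_{OA_i}$ pointwise. By Theorem~\ref{main1} the main tree $\hat\ga$ is $G$-invariant, so $\sigma_i(\ga_{OA_i})$ is an arc of the dendrite $K$ joining $\sigma_i(O)=O$ to $\sigma_i(A_i)=A_i$; unique arcwise connectedness of $K$ then forces $\sigma_i(\ga_{OA_i})=\ga_{OA_i}$. Now suppose some $x\in\ga_{OA_i}$ had $y:=\sigma_i(x)\ne x$. Both $x$ and $y$ lie on $\ga_{OA_i}$, so one of them, say $x$, lies strictly between $O$ and $y$ along this arc; hence $x$ separates $O$ from $y$ in $K$. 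Applying the homeomorphism $\sigma_i$, the point $y$ separates $\sigma_i(O)=O$ from $\sigma_i(y)=x$ in $K$, i.e.\ $y$ lies strictly between $O$ and $x$ along $\ga_{OA_i}$, contradicting the choice of $x$. Therefore $\ga_{OA_i}\subseteq\fix(\sigma_i)=\ell_i$, and a connected subset of the line $\ell_i$ containing the two distinct points $O$ and $A_i$ is precisely the segment $[O,A_i]$.

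It remains to read off the dimension statement. By Proposition~\ref{npod} we have $\hat\ga=\bigcup_{k=1}^n\ga_{OA_k}$, so $\hat\ga$ is the union of $n$ straight segments emanating from $O$ --- a straight symmetric $n$-pod --- and in particular it is a nondegenerate set with $\dim_H\hat\ga=1$. Theorem~\ref{equal}(i) then yields $\dim_H CP(K)=\dim_H\hat\ga=1$, as claimed.

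I expect the only delicate point to be the \emph{main step}: ruling out that $\sigma_i$ maps $\ga_{OA_i}$ onto itself while permuting its interior points. Once the set equality $\sigma_i(\ga_{OA_i})=\ga_{OA_i}$ is in hand this is a one-line separation argument (equivalently, one invokes that a monotone involution of a closed interval is the identity); everything else is an application of the already-proved Theorems~\ref{main1} and~\ref{equal} together with elementary facts about $D_n$.
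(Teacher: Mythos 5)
Your proposal is correct and follows essentially the same route as the paper, whose one-line proof likewise rests on the reflection of $D_n$ across the line through $O$ and $A_i$ forcing $\gamma_{OA_i}$ to be a straight segment. You merely make explicit the details the paper leaves implicit: setwise invariance of the arc via unique arcwise connectedness of $K$ (Theorem \ref{main1}), pointwise fixing via the involution/separation argument, and the dimension claim via Proposition \ref{npod} together with Theorem \ref{equal}.
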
 

\begin{proof} 
Since $D_n$ contains a symmetry with respect to the straight line containing $O$ and $A_n$,$\ga_n$ itself is a straight line segment.\end{proof}

%
%

From the above statements we see that   Proposition \ref{comp} and Theorem \ref{order} in the case of $G$-symmetric polygonal systems with $G$ being the rotation group of order $n$ and $P$ a regular n-gon, acquire the following form:

\begin{proposition}\label{comp1} Let $\eS$ be a $G$-symmetric $P$-polygonal system of similarities, where $P$ is a regular n-gon 
and $G$ contains the rotation group of $P$. Then:\\
a) $V_p\IN EP(\hat\ga)\IN EP(K)$;\\
b) For each cut point $y\in K\mmm\bigcup\limits_{\bj\in I^*}S_\bj(V_P)$, either $y=S_\bi(O)$  for some
$\bi\in I^*$ and  $Ord(y,K)=n$, or $Ord(y,K)=2$.\\ 
c) For any $y\in\bigcup\limits_{\bj\in I^*}S_\bj(V_P)$ there is unique $x\in \bigcup\limits_{i\in I}S_i(V_P)$, that 
 $$Ord(y,K)=Ord(x,K)=\#\pi^{-1}(y)=\#\pi^{-1}(x)=\#\{i\in I: x \in S_i(V_P)\} \le 1+\left\lceil{\dfrac{4}{n-2}}\right\rceil$$
 \end{proposition}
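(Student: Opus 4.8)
The three parts of Proposition~\ref{comp1} are specializations of Proposition~\ref{comp} and Theorem~\ref{order} to the case where $P$ is a regular $n$-gon and $G\supseteq \zz_n$; the work is essentially to feed in the extra structure provided by Proposition~\ref{npod}, its corollary, and Lemma~\ref{gaon}. Part~a) is immediate: the corollary to Proposition~\ref{npod} already states that every vertex $A_k$ of $P$ is an end point of $\hat\ga$ and of $K$, so $V_P\IN EP(\hat\ga)\IN EP(K)$ needs no further argument beyond a restatement.

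\textbf{Part b).} Let $y\in K\mmm\bigcup_{\bj}S_\bj(V_P)$ be a cut point. By Theorem~\ref{order}b), $\#\pi^{-1}(y)=1$ and there exist $S_\bi$ and $x\in\hat\ga$ with $y=S_\bi(x)$ and $Ord(y,K)=Ord(x,\hat\ga)$. Now apply Proposition~\ref{npod}: the main tree $\hat\ga$ is an $n$-pod whose only ramification point is the center $O$, with $Ord(O,\hat\ga)=n$, and every other point of $\hat\ga$ is interior to one of the arcs $\ga_k$ (or is a vertex $A_k$, but those are excluded since $y\notin S_\bi(V_P)$ would force $x\notin V_P$). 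Hence $x$ is either $O$, giving $Ord(x,\hat\ga)=n$ and $y=S_\bi(O)$, or $x$ is an interior point of some $\ga_k$, where $Ord(x,\hat\ga)=2$. This yields the dichotomy in b). One should double-check the degenerate possibility $x\in V_P$: if $x=A_k$ then $y=S_\bi(A_k)\in S_\bi(V_P)$, contradicting the hypothesis, so this case does not arise.

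\textbf{Part c).} This is where the real work lies. Take $y\in\bigcup_{\bj}S_\bj(V_P)$ with $\#\pi^{-1}(y)=s$. Theorem~\ref{order}c) furnishes multiindices $\bi_1,\dots,\bi_s$ and vertices $x_1,\dots,x_s$ with $S_{\bi_k}(x_k)=y$, pairwise $S_{\bi_k}(P)\cap S_{\bi_l}(P)=\{y\}$, and $Ord(y,K)=\sum_k Ord(x_k,\hat\ga)$. Since each $x_k\in V_P$ is an end point of $\hat\ga$ (part~a), each summand $Ord(x_k,\hat\ga)=1$, so $Ord(y,K)=s=\#\pi^{-1}(y)$. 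The heart of c) is then two things: first, the claim that there is a \emph{unique} $x\in\bigcup_{i\in I}S_i(V_P)$ with the displayed chain of equalities — i.e.\ one can always pull $y$ back to the first level; second, the numerical bound $s\le 1+\lceil 4/(n-2)\rceil$. For uniqueness/first-level reduction, I would argue that each $\bi_k$ can be shortened to length one: because the vertex angle of a regular $n$-gon is $\te=(n-2)\pi/n$ and all vertex angles are equal, a vertex of $S_{\bi_k}(P)$ sits at $y$, and the local picture of how subpolygons of $\wP^{(|\bi_k|)}$ cluster around $y$ is governed entirely by the level-one configuration; concretely, replace $\bi_k=i_1\bi_k'$ by $i_1$ and note $x\mapsto S_{i_1}(V_P)\ni$ the corresponding point, using condition (D2) and the equality of vertex angles to see no information is lost. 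Then all $x_k$ descend to a common $x$ in $\bigcup_i S_i(V_P)$ with $\#\{i: x\in S_i(V_P)\}=s$. For the bound, count how many copies $P_i$ can meet at a single vertex $x$ of the tessellation $\wP^{(1)}$: each contributes a vertex angle of $P_i$ which is at least $\te_{\min}=(n-2)\pi/n$ times the contraction ratio, but more cleanly, around $x$ the polygons $P_{i}$ occupy disjoint angular sectors, each at least\ldots here one must be careful, since a small subpolygon meeting $x$ at one of \emph{its} vertices still subtends its own vertex angle $\ge(n-2)\pi/n$ as seen from that vertex. The total angle available at $x$ is at most $2\pi$ if $x$ lies in the interior of $P$, but $x$ may lie on $\partial P$ or at a vertex of $P$; in the worst case $x=A_j$ is a vertex of $P$ where only the angle $\te=(n-2)\pi/n$ is available, giving at most $\lfloor \te/\text{(min sub-angle)}\rfloor$ copies — reconciling this with the stated $1+\lceil 4/(n-2)\rceil$ is the computation to pin down. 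I would run the estimate from Theorem~\ref{order}c), $Ord(y,K)\le (n_P-1)(\lceil 2\pi/\te_{\min}\rceil -1)$, with $n_P=n$, $\te_{\min}=(n-2)\pi/n$: then $2\pi/\te_{\min}=2n/(n-2)=2+4/(n-2)$, so $\lceil 2\pi/\te_{\min}\rceil-1=1+\lceil 4/(n-2)\rceil$, and since each $Ord(x_k,\hat\ga)=1$ the factor $(n-1)$ drops out, leaving exactly $s\le 1+\lceil 4/(n-2)\rceil$.

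\textbf{Main obstacle.} The delicate point is the \emph{uniqueness and first-level reduction} in c) — showing every $x_k$ can be canonically pulled back to a single level-one point $x$ with $\#\pi^{-1}$ and $Ord$ all equal to $\#\{i:x\in S_i(V_P)\}$. The angle bound itself falls out of Theorem~\ref{order}c) once one substitutes $\te_{\min}=(n-2)\pi/n$ and uses that vertices of $\hat\ga$ in $V_P$ have order $1$; the geometric bookkeeping that no distinct first-level points collapse together, and that the cluster of subpolygons at $y$ faithfully mirrors the cluster at $x$, is the part requiring genuine care with conditions (D1)--(D2) and the regular-polygon geometry.
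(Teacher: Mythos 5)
Your proposal is correct and follows essentially the same route as the paper: parts a) and b) specialize Proposition \ref{comp} and Theorem \ref{order} using Proposition \ref{npod} (every $x_k\in V_P$ has $Ord(x_k,\hat\ga)=1$, so $Ord(y,K)=\#\pi^{-1}(y)$), and the bound comes from $\te=\pi-\frac{2\pi}{n}$ giving $\left\lceil 2\pi/\te_{min}\right\rceil-1=1+\left\lceil 4/(n-2)\right\rceil$, exactly as in the paper's opening computation. The "first-level reduction" you flag as delicate is handled in the paper just as tersely as you sketch it: one introduces the finite collection $\eC$ of level-one points $C_k$ with $s_k=\#\{j:S_j(V_P)\ni C_k\}\ge 3$ and observes that any $y=S_\bj(C_k)$ inherits $\#\pi^{-1}(y)=s_k=Ord(y,K)$, so your plan matches the intended argument.
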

 \begin{proof}All vertex angles of $P$ are $\te=\pi-\dfrac{2\pi}{n}$, so 
$ \left\lceil{\dfrac{2\pi}{\te_{min}}}\right\rceil-1=1+\left\lceil{\dfrac{4}{n-2}}\right\rceil$.

a) Take a vertex $A_i\in V_P$. There is unique $j\in I$ such that $A_i\in S_j(V_P)$. For that reason $\#\pi^{-1}(A_i)=1$.
Since $S_j(P)$ cannot contain the center $O$, $\#(S_j(V_P)\cap\hat\ga)=2$, therefore by Theorem \ref{comp}, $Ord(A_i,\hat\ga)=1$
and  $Ord(A_i,K)=1$, so $A_i\in EP(K)$.

b) If for some $\bj\in I^*$,   $y=S_\bj(O)$, then $Ord(y,K)=n$.  Since for any point $x\in CP(\hat\ga)\mmm\{O\}$,  $Ord(x,\hat\ga)=2$,  the same is true for $y=S_\bj(x)$ for any $y\in I^*$.

c) Now let $\eC=\{C_1,...,C_N\}$ be the full collection of those points $C_k\in\bigcup\limits_{i\in I}S_i(V_P)$ for which 
$s_k:=\#\{j\in I: S_j(V_P)\ni C_k\}\ge 3$. By Theorem \ref{comp}, $\#\pi^{-1}(C_k)=s_k$ and $Ord(C_k,K)=s_k$, while $s_k\le 1+\left\lceil{\dfrac{4}{n-2}}\right\rceil$\\

Then, if $y\in S_\bj(C_k)$ for some $\bj\in I^*$ and $C_k\in\eC$, then $\#\pi^{-1}(y)=s_k=Ord(y,K)$.
\end{proof}

Applying the Proposition \ref{comp1} to different n, we get the possible ramification orders for regular n-gons:\\
1. If $n \ge 6$ then all ramification points of $K$ are the images $S_\bj(O)$ of the centre $O$ and have the order $n$.\\
 2. If $n = 4$ or $5$ then there is a finite set of ramification points $x_1,...,x_r$, whose order is equal to $3$ such that each $x_k$ is a common vertex of polygons $S_{k1}(P), S_{k2}(P), S_{k3}(P)$. Then each ramification point is represented either as $S_\bj(O)$ and has the order $n$ or as $S_\bj(x_k)$ and has the order 3.\\
 3. If $n = 3$ the centre is a ramification point of order 3 and those ramification points which are not images of $O$ will have an order less than or equal to $5$. 

\subsection{Self-similar zippers, whose attractors are dendrites}
\label{subsec:3}

\begin{theorem}\label{zipdend}

Let $(\eS, P)$  be a $G$-symmetric $P$-polygonal system  of similarities. Let $A,B$ be two vertices of the polygon $P$ and $L$ be 
the line segment $[A,B]$. If $\eZ=\{S_1', \ldots, S_k'\}$ is such family of maps from $\widetilde\eS$ that $\tilde L=
\UU_{i=1}^k S_i'(L)$ is a polygonal line connecting $A$ and $B$, then the attractor $K_\eZ$ of $\eZ$ is a 
subcontinuum of $K$. If for some subpolygon $P_j$, $\widetilde L\cap P_j$  contains more than one segment, then 
$K_\eZ$ is a dendrite. 
\end{theorem}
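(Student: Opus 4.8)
The plan is to establish the two assertions in turn, using the machinery already developed. For the first assertion — that $K_\eZ$ is a subcontinuum of $K$ — the key observation is that $L=[A,B]$ is a subarc of the main tree $\hat\ga$: indeed $L=\ga_A\cup\ga_B$ in the notation of Proposition \ref{npod} (using that $\ga_{A_iA_j}=\ga_i\cup\ga_j$), and by Theorem \ref{main1} the main tree is $G$-invariant, so every $S_i'(L)$ with $S_i'\in\widetilde\eS$ is a subarc of $S_\bj(\hat\ga)\IN K$. Hence $\tilde L\IN K$, and since $\tilde L$ is a connected set containing $A$ and $B$, applying the Hutchinson operator $T_\eZ$ of $\eZ$ repeatedly keeps us inside $K$ (each $S_i'(K)\IN K$ because $S_i'=S_\bj g$ maps $K=g(K)$ into $S_\bj(K)\IN K$). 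Thus $T_\eZ^n(\tilde L)$ is a nested or at least bounded family of connected compacta inside $K$, and $K_\eZ=\bigcap_n T_\eZ^n(\tilde L)$ — or more carefully, the limit guaranteed by Hutchinson's theorem — is a subcompactum of $K$; connectedness of $K_\eZ$ follows since $\tilde L$ is connected and $\eZ$ is a zipper (consecutive pieces $S_i'(L)$ share the endpoint $z_i$, so $T_\eZ$ preserves connectedness). Since $K$ is a dendrite, every subcontinuum of $K$ is itself a dendrite, so $K_\eZ$ is automatically a dendrite — which makes me suspect the real content of the theorem's second sentence is rather that $K_\eZ$ is \emph{not an arc}, i.e. it genuinely ramifies.

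So for the second assertion I read ``$K_\eZ$ is a dendrite'' as ``$K_\eZ$ is a non-degenerate dendrite that is not a Jordan arc'' (equivalently $RP(K_\eZ)\neq\0$), the hypothesis being that $\tilde L\cap P_j$ contains at least two distinct segments $S_a'(L),S_b'(L)$ for some subpolygon $P_j=S_j(P)$. The idea is a self-similarity/blow-up argument: because $\tilde L$ is a polygonal path from $A$ to $B$ and $S_a'(L)$, $S_b'(L)$ are two of its edges lying in the same small polygon $P_j$, the subword structure of $\eZ$ forces a branching. More precisely, I would locate a finite address $\bi$ in the zipper alphabet such that $S'_{\bi}(\tilde L)\IN P_j$ and $S'_\bi(\tilde L)$ still contains two of the segments $S_a'(L),S_b'(L)$ among its edges; iterating, inside $K_\eZ$ one finds two arcs issuing from a common point in different directions, together with the ``through'' direction of the path, producing a point of order $\ge 3$. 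Here one uses that the segments $S_a'(L)$ and $S_b'(L)$ are genuinely distinct copies of $L$ that are not collinear (they are images of $L$ under maps $S_\bj g$ with different $g\in G$, and the rotation group acts on directions), so the union $S_a'(K_\eZ)\cup S_b'(K_\eZ)$ cannot be contained in a single arc. The separation conditions $P_i\cap P_j\in\{\0,\text{vertex}\}$ from (D2), pushed down to the refined system, guarantee that the overlaps between the pieces $S_i'(K_\eZ)$ are controlled (points, not arcs), so the attractor is still a tree and the branching we exhibit is real rather than an artifact of overlap.

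The concrete steps, in order, are: (1) identify $L$ with the subarc $\ga_A\cup\ga_B$ of $\hat\ga$ and record that $S_i'(L)$ is an arc in $K$ for every $S_i'\in\widetilde\eS$; (2) show $\tilde L\IN K$ and that $\eZ$ has the zipper property (consecutive edges meet in the vertex $z_i$, which holds by the hypothesis that $\tilde L$ is a polygonal line from $A$ to $B$), hence $K_\eZ\IN K$ is a subcontinuum and therefore a dendrite; (3) under the extra hypothesis, pick $P_j$ and the two segments $S_a'(L),S_b'(L)\IN \tilde L\cap P_j$, note they are non-collinear and meet $\tilde L$'s combinatorial path in a way that forces $K_\eZ$ to contain a point incident to three arcs with three distinct tangent directions, concluding $K_\eZ$ is not an arc; (4) invoke Theorem \ref{Jordan} in the contrapositive, or simply the characterization of arcs among dendrites, to phrase this as ``$K_\eZ$ is a (non-arc) dendrite.''

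The main obstacle I anticipate is step (3): making rigorous the claim that two copies of $L$ sitting inside one subpolygon $P_j$ actually force a ramification of the \emph{attractor} $K_\eZ$, rather than merely of the first-level approximation $\tilde L$. One has to rule out the degenerate possibility that all the branching ``cancels'' in the limit — for instance if some later refinements were to straighten things out — and this is where I would need the $G$-symmetry (which propagates the branching to every scale: the piece $S_\bj g$ reproduces the same two-segments-in-one-polygon configuration inside $K_\eZ$ itself) together with the polygonal separation conditions to ensure the branch point survives. Formalizing ``non-collinear images of $L$'' cleanly — showing the two relevant elements of $G$ produce segments meeting at a positive angle, using that $G$ is a non-trivial symmetry group of a regular polygon — is the crux, but it should follow from the explicit description of $G$ as (containing) the rotation group of $P$.
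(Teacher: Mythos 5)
Your first half is essentially the paper's argument, and your reading of the second assertion is the right one: since every subcontinuum of a dendrite is a dendrite, the real content is that $K_\eZ$ ramifies, and indeed the paper's proof ends by exhibiting a ramification point of order at least $3$. One correction there: your claim that $L=[A,B]$ is a subarc of the main tree is false in general --- the arc in $K$ joining $A$ and $B$ is $\ga_{OA}\cup\ga_{OB}$, a fractal arc through the center $O$, not the straight segment (the segment case occurs only when the symmetry group is dihedral). Fortunately this claim is not needed: as you also observe, each $S_i'\in\widetilde\eS$ maps $K$ into $K$, so $K_\eZ\IN K$, and the zipper structure gives connectedness; that is exactly how the paper gets the first assertion.

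The genuine gap is in your step (3). Non-collinearity of the two segments $S_a'(L),S_b'(L)$ cannot force ramification of the attractor: the polygonal line $\widetilde L$ itself is an arc made of non-collinear segments, so "two non-collinear copies of $L$" is compatible with $K_\eZ$ being a Jordan arc, and your proposed blow-up (an address $\bi$ with $S_\bi'(\widetilde L)\IN P_j$ still containing both first-level segments) is neither constructed nor plausible on scale grounds. The paper's decisive mechanism, which your sketch never uses, is the following: by the proof of Lemma \ref{gaon}, the arc $\ga_{AB}\IN K_\eZ$ joining $A$ and $B$ satisfies $\ga_{AB}=\ga_{OA}\cup\ga_{OB}$, i.e.\ it passes through the center $O$; moreover two maps of $\widetilde\eS$ whose segment-images lie in the same subpolygon $P_j$ must both be of the form $S_j\circ g$ with $g\in G$, and since every $g\in G$ fixes $O$ they send $O$ to the \emph{same} point $S_j(O)$. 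Consequently $S_a'(\ga_{AB})\cup S_b'(\ga_{AB})\IN K_\eZ$ consists of arcs emanating from the single point $S_j(O)$ and ending at the points $S_a'(A),S_a'(B),S_b'(A),S_b'(B)$, at least three of which are distinct; unique arcwise connectedness of $K\supset K_\eZ$ then makes $S_j(O)$ a ramification point of $K_\eZ$ of order at least $3$. Without this "common center" observation the branch point is never actually exhibited, so the proposal as it stands does not prove the second assertion.
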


\begin{center}
\includegraphics[width=.3 \textwidth]{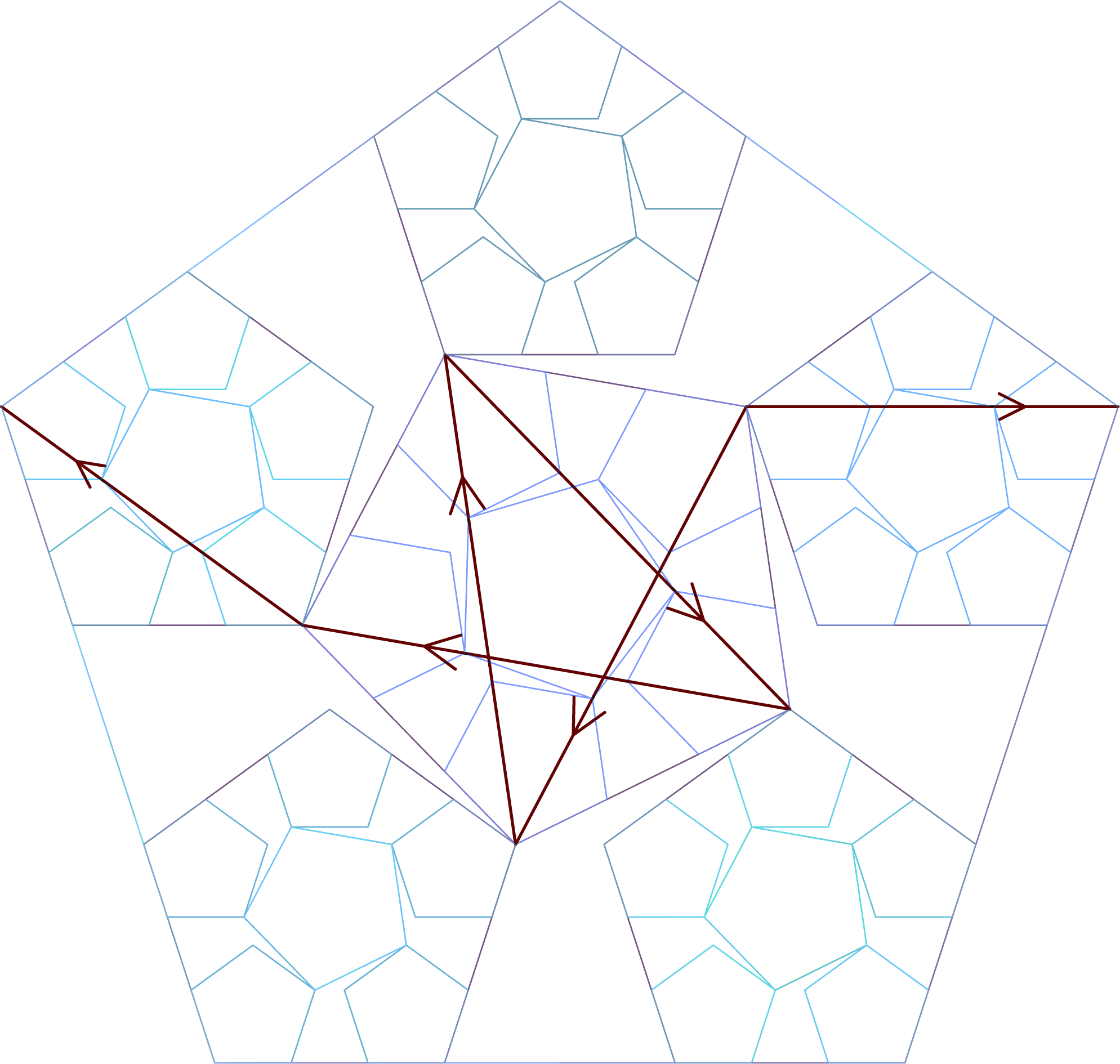} \qquad \qquad \includegraphics[width=.3 \textwidth]{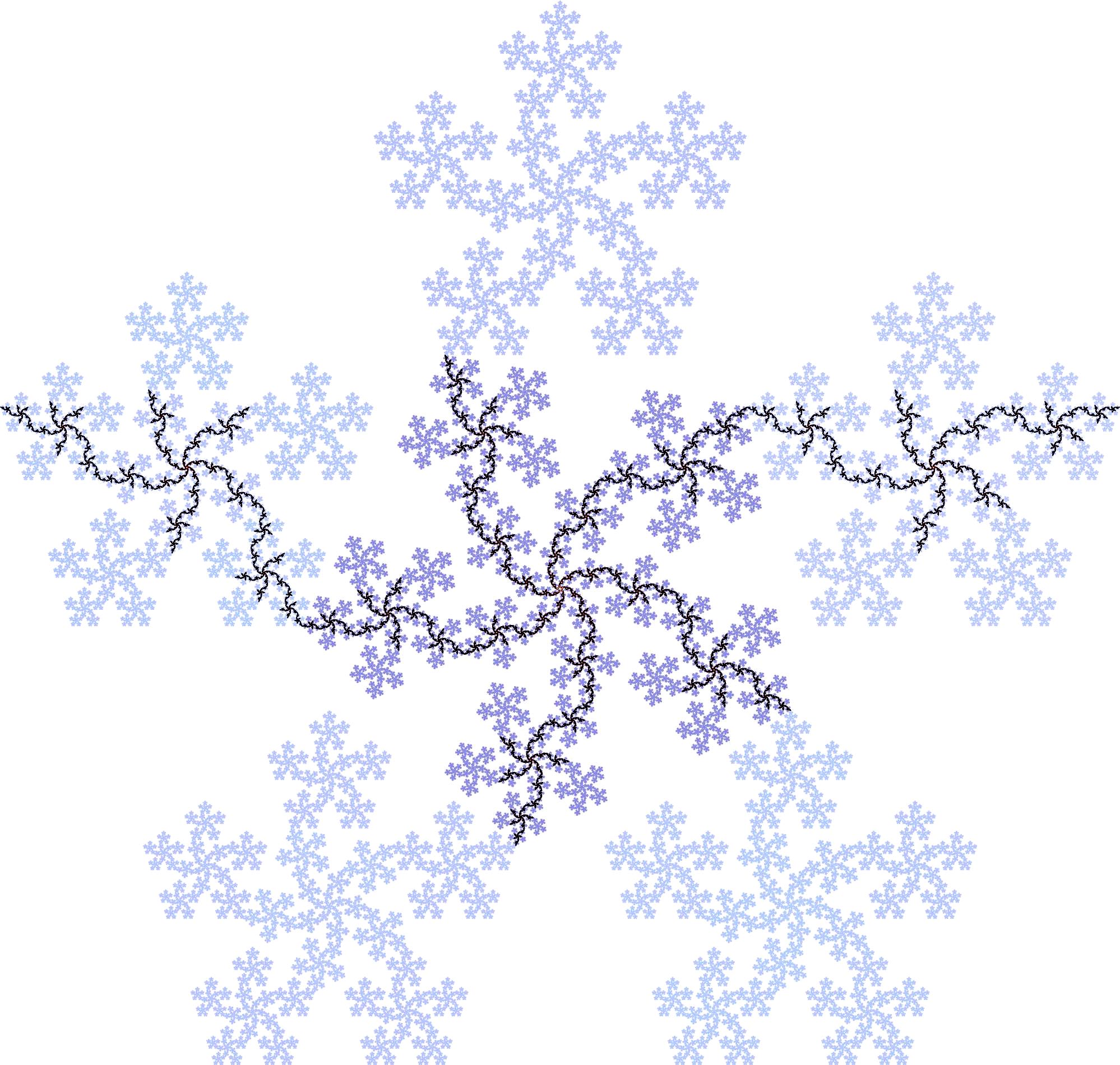}\\ {\tiny The G-SPS $(\eS, P)$  with polygonal lines $\widetilde L$ joning the vertices $A, B$ and attractors of $\eS$ and the zipper $\eZ$.}\end{center}

\begin{proof} 
Since $\eZ\IN\widetilde\eS$, the attractor $K_\eZ$ is a subset of $K$. The system $\eZ$ is a zipper with 
vertices $A,B$, therefore $K_\eZ$ is a continuum, and therefore  is a subdendrite of 
the dendrite $K$. Let $\ga_{AB}$ be the Jordan arc  connecting $A$ and $B$ in $K_\eZ$, and,  therefore, in $K$. 
By  the proof of Lemma \ref{gaon}, $\ga_{AB}=\ga_{OA}\cup\ga_{OB}$. If the maps $S_{i_1}', S_{i_2}'$ send $L$ to two 
segments belonging to the same subpolygon $P_{i_0}$, then $S_{i_1}'(\ga_{AB})\bigcup  S_{i_2}'(\ga_{AB})$ is equal 
to $S_{i_1}'(\ga_{OA}\bigcup \ga_{OB})\bigcup  S_{i_2}'(\ga_{OA}\bigcup\ga_{OB})$.
The  set $\{S_{i_1}'(A),S_{i_1}'(B), S_{i_2}'(A), S_{i_2}'(B)\} $ contains at least 3 different points, therefore 
$S_{i_1}'(O)$ is a ramification point of $K_\eZ$ of order at least 3.
\end{proof}


\begin{corollary}
Let $u_i$ be the number of segments of the intersection $\tilde L\cap P_i$ and $u=\max u_i$. Then maximal  order 
of ramification points of $K_\eZ$ is greater or equal to $\min(u+1,n)$.
\end{corollary}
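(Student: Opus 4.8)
The plan is to leverage the last assertion of Theorem~\ref{zipdend}: whenever two maps $S_{i_1}', S_{i_2}'\in\eZ$ carry $L$ to two segments lying in a single subpolygon $P_{i_0}$, the point $S_{i_1}'(O)$ becomes a ramification point of $K_\eZ$ of order at least $3$. I would first observe that this produces \emph{one} ramification point from each coincidence of two segments inside a subpolygon; to push the order up I must instead collect \emph{all} the segments of $\tilde L\cap P_i$ at once and show they meet at a common image of the center $O$.

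First I would fix the index $i$ realizing the maximum $u=u_i$, so that $\tilde L\cap P_i$ consists of $u$ segments $S_{j_1}'(L),\dots,S_{j_u}'(L)$, each $S_{j_\ell}'\in\widetilde\eS$ with $S_{j_\ell}'(L)\IN P_i$. Since $\tilde L$ is a polygonal line connecting $A$ and $B$ and each $P_j$ is a subpolygon meeting the others only in vertices (by {\bf D2}), these $u$ segments form a connected subpolygonal arc inside $P_i$ — a consecutive run of the parametrization of $\tilde L$ — hence they share $u-1$ interior junction vertices and, in particular, any two of them lie in the common subpolygon $P_i$. Applying the argument in the proof of Theorem~\ref{zipdend} to the pair $S_{j_1}', S_{j_2}'$ already shows $S_{j_1}'(O)$ is a ramification point; the key point is that \emph{all} of $S_{j_1}'(O),\dots,S_{j_u}'(O)$ coincide. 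This is where I would use that each $S_{j_\ell}'$ is, up to a rotation $g\in G$ fixing $O$ (the center of the regular $n$-gon is fixed by the whole rotation group, so $g(O)=O$), of the form $S_\bj g$ with the \emph{same} $S_\bj$ — namely the one with $S_\bj(P)=P_i$ — because by {\bf D2} there is a unique $S_\bj\in\eS$ with $P_\bj=P_i$, and each $S_{j_\ell}'$ restricted to produce a segment in $P_i$ must be of the form $S_\bj g_\ell$. Hence $S_{j_\ell}'(O)=S_\bj g_\ell(O)=S_\bj(O)=:y$ for every $\ell$.

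Next I would count the components of $K_\eZ\mmm\{y\}$. Each $S_{j_\ell}'(\ga_{AB})=S_{j_\ell}'(\ga_{OA}\cup\ga_{OB})$ contributes two arcs emanating from $y$, namely $S_{j_\ell}'(\ga_{OA})$ and $S_{j_\ell}'(\ga_{OB})$, whose other endpoints are the distinct vertices $S_{j_\ell}'(A),S_{j_\ell}'(B)$ of the polygonal line $\tilde L$. Since the $u$ segments $S_{j_\ell}'(L)$ form a consecutive arc of $\tilde L$ with $u-1$ shared junctions, the multiset $\{S_{j_\ell}'(A),S_{j_\ell}'(B):\ell=1,\dots,u\}$ consists of $u+1$ distinct points of $\tilde L$ (the $u+1$ successive vertices of that subarc). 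These $u+1$ points lie in $u+1$ distinct components of $K_\eZ\mmm\{y\}$ provided none of the connecting arcs passes back through $y$; this I would justify exactly as in Theorem~\ref{zipdend}, using that $K_\eZ$ is a dendrite (uniquely arcwise connected) so that the arc from $y$ to $S_{j_\ell}'(A)$ is precisely $S_{j_\ell}'(\ga_{OA})$, and distinct such arcs share only the point $y$. Therefore $Ord(y,K_\eZ)\ge u+1$. Combined with the trivial bound $Ord(y,K_\eZ)\le Ord(y,K)\le n$ from Proposition~\ref{comp1}, the maximal order of ramification points of $K_\eZ$ is at least $\min(u+1,n)$.

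The main obstacle I anticipate is the bookkeeping in the middle step: verifying cleanly that the $u$ segments of $\tilde L\cap P_i$ really form one \emph{consecutive} run of the parametrizing polygonal line (rather than several disjoint runs that happen to re-enter $P_i$), and that consequently all $u$ maps share the same underlying $S_\bj$ and hence send $O$ to the same point $y$. If $\tilde L$ re-enters $P_i$ in several separate runs, one should take $i$ and the run realizing the largest single run-length; the argument then gives the stated bound with $u$ reinterpreted as that maximal run-length, which still coincides with $\max u_i$ in all the cases of interest. The remaining steps are routine given the dendrite structure already established.
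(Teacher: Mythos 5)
Your core mechanism is the right one and matches the paper's: every map of $\eZ$ that sends $L$ to a segment lying in $P_i$ has the form $S_ig_\ell$ with the same $S_i\in\eS$ (by {\bf D2} a segment cannot lie in two distinct subpolygons), and since every $g_\ell\in G$ fixes the center, all these maps send $O$ to the one point $y=S_i(O)$, from which images of $\ga_{OA}$ and $\ga_{OB}$ run to vertices of $P_i$ and pairwise meet only at $y$ by unique arcwise connectedness. However, there is a genuine gap in the middle step: the claim that the $u$ segments of $\tilde L\cap P_i$ form a single consecutive run of the polygonal line is unjustified and false in general — $\tilde L$ may leave $P_i$ through a vertex, pass through other subpolygons, and re-enter $P_i$ later, so $\tilde L\cap P_i$ can consist of several separated runs. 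Your proposed fallback (take the longest single run) does not repair this: with two runs of lengths $u_1'+u_2'=u$ it only yields order at least $\max(u_1',u_2')+1$, which is strictly weaker than the stated $\min(u+1,n)$, and the remark that the two quantities ``coincide in all the cases of interest'' is not an argument.

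The fix is to drop consecutiveness altogether, which is exactly what the paper does. Since all segments of $\tilde L\cap P_i$ are carried to arcs emanating from the same point $y$, you only need a count of their distinct endpoints: $u$ distinct segments whose endpoints are vertices of $P_i$ and which form a subgraph of the (simple) polygonal arc $\tilde L$ constitute a forest, hence have at least $u+1$ distinct endpoints when $u<n$, and in any case at least $\min(u+1,n)$ of the $n$ vertices of $P_i$. Each such vertex is joined to $y$ by an image of $\ga_{OA}$ (or $\ga_{OB}$, which is a rotated copy of $\ga_{OA}$), and these arcs intersect only at $y$; hence $Ord(y,K_\eZ)\ge\min(u+1,n)$, whether or not the runs are consecutive. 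With this replacement your argument becomes complete and is essentially the paper's proof.
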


\begin{proof}  Suppose $\widetilde L\bigcap P_i$ contains $u$ segments of $\widetilde L$. Then the set contains at least $u+1$ vertices of $P_i$ if $u<n$ and contains  $n$ vertices of $P_i$ if  $u= n-1$ or $n$. Then  the set $K_\eZ\cap P_i$ contains at 
least $u+1$ (resp. exactly $n$) different images of the arc $\ga_{OA}$.\end{proof}

\end{document}